\newcommand\NN{\mathbb{N}}
\newcommand\RR{\mathbb{R}}
\newcommand\ZZ{\mathbb{Z}}
\newcommand\A{\mathcal A}
\newcommand\FF{\mathcal{F}}
\newcommand\GG{\mathcal{G}}
\DeclareMathOperator\sP{P}   
\newcommand{\rP}{\mathrm{P}} 
\DeclareMathOperator\dist{dist}
\renewcommand{\mod}{\operatorname{mod}}
\newtheorem{theorem}{Theorem}[section]
\newtheorem{definition}[theorem]{Definition}
\newtheorem{corollary}[theorem]{Corollary}
\newtheorem{lemma}[theorem]{Lemma}
\newtheorem{remark}[theorem]{Remark}
\newtheorem{proposition}[theorem]{Proposition}
\newcommand{\address}{Address: Department of Mathematics, University of North Texas, 1155 Union Circle \#311430, Denton, TX 76203-5017, USA; E-mail: allaart@unt.edu}
\title{On the level sets of the Takagi-van der Waerden functions}
\author{Pieter C. Allaart \footnote{\address}}
\begin{document}

\maketitle

\begin{abstract}

This paper examines the level sets of the continuous but nowhere differentiable functions
\begin{equation*}
f_r(x)=\sum_{n=0}^\infty r^{-n}\phi(r^n x),
\end{equation*}
where $\phi(x)$ is the distance from $x$ to the nearest integer, and $r$ is an integer with $r\geq 2$. It is shown, by using properties of a symmetric correlated random walk, that almost all level sets of $f_r$ are finite (with respect to Lebesgue measure on the range of $f$), but that for an abscissa $x$ chosen at random from $[0,1)$, the level set at level $y=f_r(x)$ is uncountable almost surely. As a result, the occupation measure of $f_r$ is singular.

\bigskip
{\it AMS 2000 subject classification}: 26A27 (primary), 60G50 (secondary)

\bigskip
{\it Key words and phrases}: Takagi function, Van der Waerden function, Nowhere-differentiable function, Level set, Correlated random walk
\end{abstract}

\section{Introduction}

The Takagi-van der Waerden functions are defined by
\begin{equation}
f_r(x):=\sum_{n=0}^\infty r^{-n}\phi(r^n x), \qquad r=2,3,\dots,
\label{eq:Takagi-def}
\end{equation}
where $\phi(x)=\dist(x,\ZZ)$, the distance from $x$ to the nearest integer. The first two are shown in Figure \ref{fig:f2-and-f3}. The case $r=2$ has been studied extensively in the literature; see \cite{AK,Lagarias}. It was introduced in 1903 by Takagi \cite{Takagi}, who proved that it is continuous and nowhere differentiable, and $f_2$ is now generally known as the {\em Takagi function}. Apparently unaware of Takagi's note, Van der Waerden \cite{vdW} in 1930 proved the nondifferentiability of $f_{10}$, using an argument that works for any even $r\geq 4$, but not for odd $r$ or $r=2$. Van der Waerden's paper prompted Hildebrandt \cite{Hildebrandt} and De Rham \cite{deRham} to rediscover the function $f_2$. Billingsley \cite{Billingsley} later gave the simplest proof of nondifferentiability for $r=2$, and it is his argument that is easily extended to all $r$. A short proof is included in Section \ref{sec:nondifferentiability} below.

\begin{figure}
\begin{center}
\epsfig{file=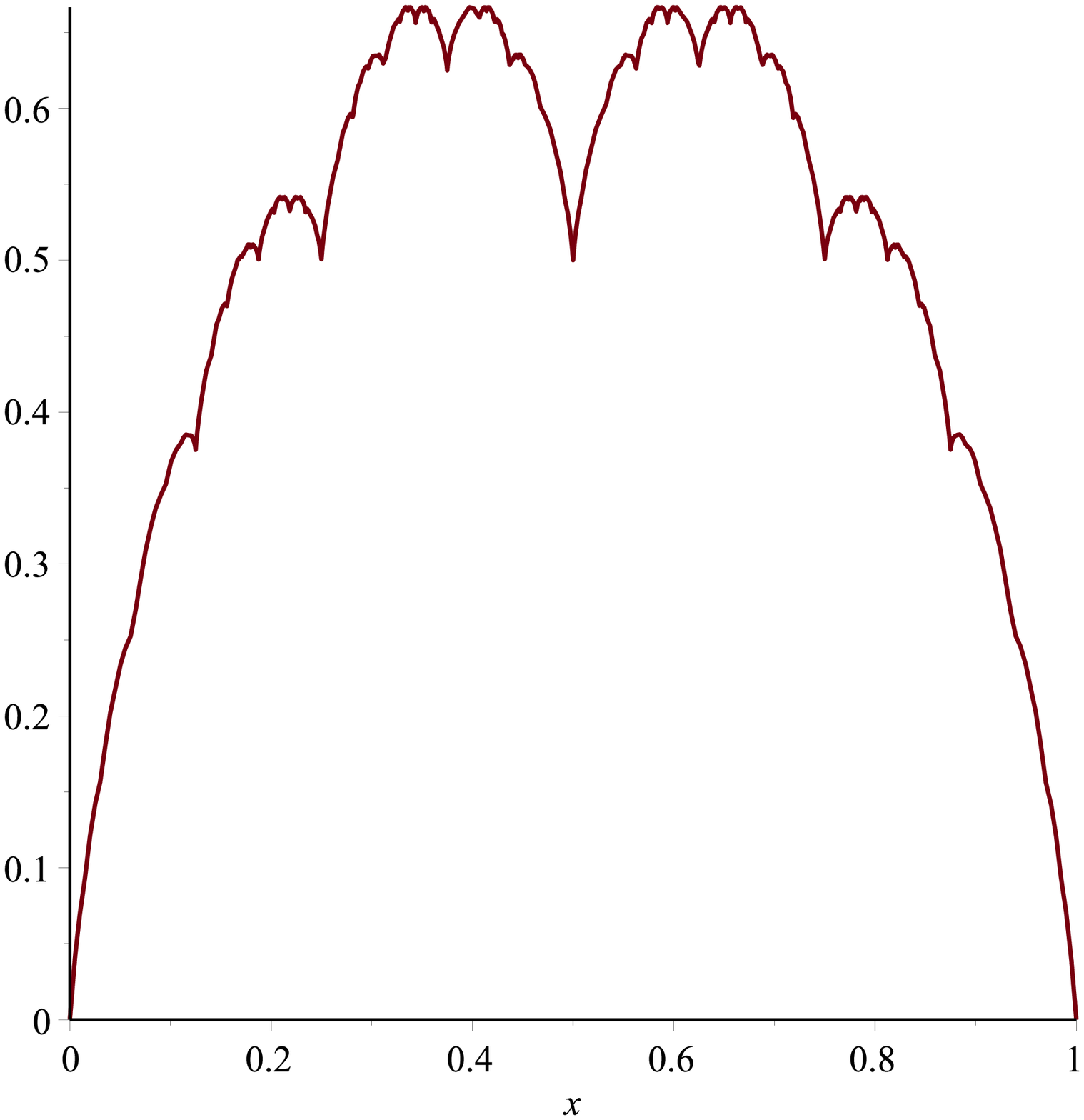, height=.21\textheight, width=.36\textwidth} \qquad
\epsfig{file=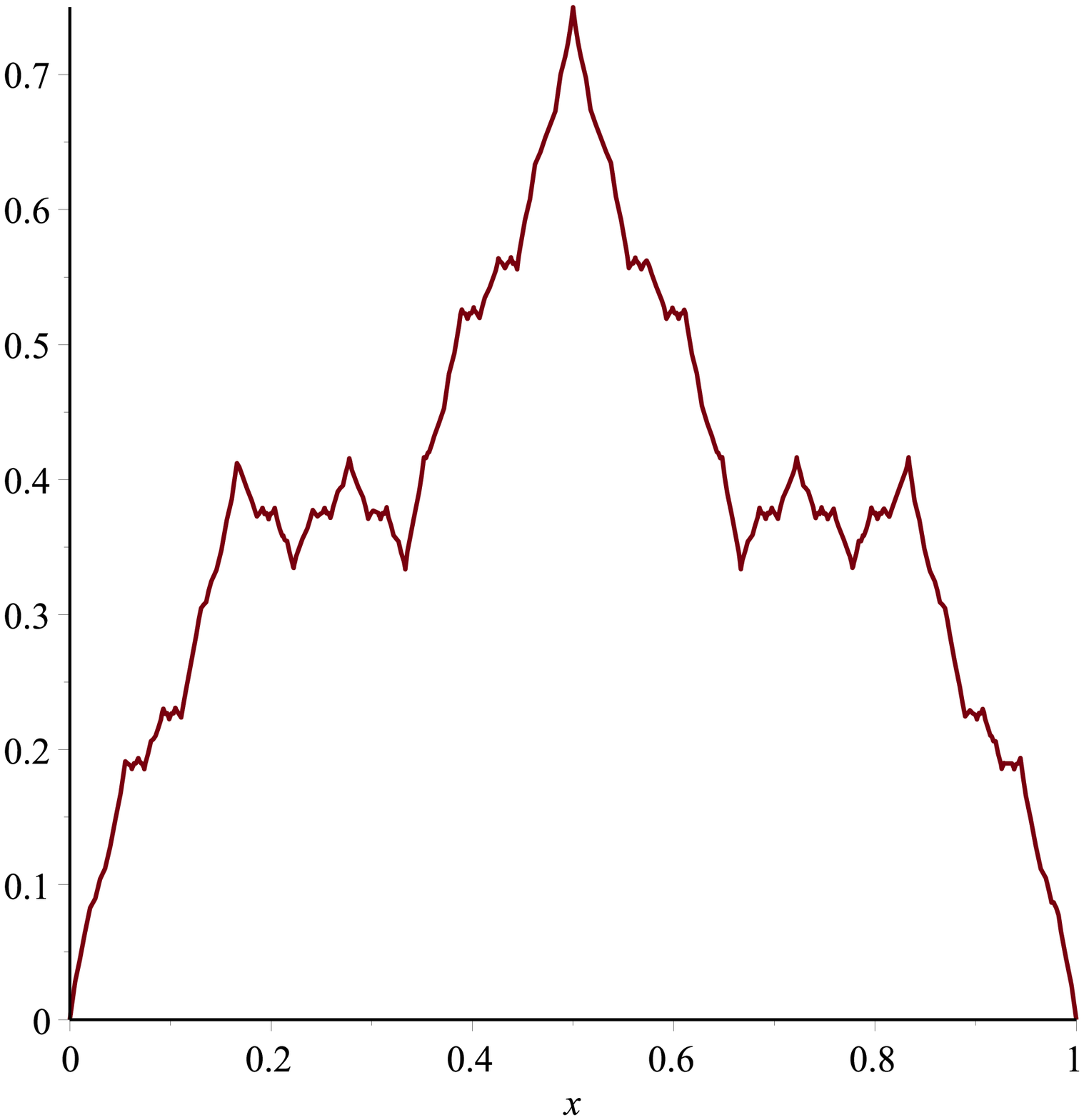, height=.21\textheight, width=.36\textwidth}
\caption{The functions $f_2$ (left) and $f_3$ (right).}
\label{fig:f2-and-f3}
\end{center}
\end{figure}

The main purpose of this note, however, is to prove the following theorem about the level sets of $f_r$, thereby solving Problem 7.6 of \cite{AK}. Let
\begin{equation*}
L_r(y):=\{x\in[0,1): f_r(x)=y\}, \qquad y\in\RR.
\end{equation*}
In what follows, the phrase ``almost every" is always meant in the Lebesgue sense, and $\lambda$ denotes Lebesgue measure on $\RR$.

\begin{theorem} \label{thm:main}
\begin{enumerate}[(i)]
\item For each $r\geq 2$ and for almost every $y$, $L_r(y)$ is finite.
\item For each $r\geq 2$ and for almost every $x\in[0,1)$, $L_r(f_r(x))$ is uncountable.
\end{enumerate}
\end{theorem}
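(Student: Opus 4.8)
The plan is to translate everything into the behavior of the integer-valued ``slope walk'' attached to $f_r$. For $x\in[0,1)$, write its base-$r$ expansion and, off a countable set, let $\epsilon_m(x)\in\{+1,-1\}$ be the sign of the slope of $t\mapsto\phi(r^m t)$ at $x$; then $S_n=\sum_{m<n}r^{-m}\phi(r^m\,\cdot)$ is affine on the $r$-adic interval $I_n(x)$ of length $r^{-n}$ containing $x$, with integer slope $b_n(x)=\sum_{m=0}^{n-1}\epsilon_m(x)$, and $f_r=S_n+r^{-n}f_r(r^n\,\cdot)$. Under $\lambda$ the increments $\epsilon_m$ form a stationary symmetric sequence: for even $r$ they are independent fair signs (so $b_n$ is simple symmetric random walk), while for odd $r$ they are correlated, the correlation coming precisely from the ``tie'' digit $c=(r-1)/2$ at which $\{r^m x\}$ lands exactly at $1/2$ and the sign is decided only by later digits. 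Rescaling $I_n(x)$ to $[0,1]$ shows that, up to an additive constant and the factor $r^{-n}$, $f_r$ looks on $I_n(x)$ like the profile $g_b(u)=bu+f_r(u)$ with $b=b_n(x)$; since $g_0=f_r$ is symmetric about $u=1/2$, a return of the walk to $0$ reproduces an unperturbed symmetric copy of $f_r$.

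The engine of the proof is a dichotomy for a point $x$ with $y=f_r(x)$: I will show that $x$ is isolated in $L_r(y)$ if the walk $b_n(x)$ is eventually of one sign (escapes to $+\infty$ or to $-\infty$), whereas $x$ is a two-sided accumulation point, and $L_r(y)$ contains a Cantor set through $x$, if $b_n(x)$ changes sign infinitely often. The ``accumulation'' half is the constructive one: at each scale $n$ with $b_n(x)=0$ the profile is the symmetric $g_0$, so $y$ is attained both at $x$ and at its mirror image in $I_n(x)$, producing a genuine bifurcation; the mirror symmetry guarantees the mirrored branch carries the same sub-structure, so bifurcations at infinitely many scales assemble into a binary tree of nested intervals whose intersection is an uncountable perfect set. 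The ``isolated'' half follows because a fixed eventual sign of $b_n$ forces $f_r$ to be eventually monotone to each side of $x$.

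For part (ii) I then only need that for $\lambda$-a.e.\ $x$ the walk $b_n(x)$ changes sign infinitely often, i.e.\ recurrence of $(b_n)$: for even $r$ this is the classical recurrence of simple symmetric random walk, and for odd $r$ it is recurrence of the mean-zero symmetric correlated walk, which I will obtain from its renewal (ladder-epoch) structure together with the fact that the tie-induced correlations are short-range. Recurrence gives sign changes infinitely often almost surely, and the dichotomy upgrades this to uncountability of $L_r(f_r(x))$.

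For part (i), note first that $L_r(y)$ is compact, so it is finite as soon as all of its points are isolated; by the dichotomy it therefore suffices to show that the bad set $B=\{y:\ L_r(y)\text{ has a non-isolated point}\}=\{y:\ \liminf_n Z_n(y)=\infty\}$ has Lebesgue measure zero, where $Z_n(y)$ counts the $r$-adic intervals at scale $n$ whose image under $f_r$ contains $y$. Equivalently $B=f_r(R)$ with $R=\{x:\ b_n(x)\text{ does not eventually keep one sign}\}$, a set of full $\lambda$-measure, so the content of (i) is that its image is $\lambda$-null, which is exactly the singularity phenomenon. The main obstacle is that the crude first moment of the returned ($b_n=0$) intervals, weighted by their image length $r^{-n}$, is only of order $\rP(b_n=0)\asymp n^{-1/2}$, which is not summable, so a direct Borel--Cantelli argument fails. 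I will overcome this with a multiscale renewal decomposition of the walk: using gambler's-ruin estimates at a geometric sequence of heights, the probability that a branch keeps returning (never escapes) across $K$ successive scales decays geometrically rather than polynomially, so that the expected number of branches still alive after scale $m$, summed over a suitable renewal sequence, is finite and tends to $0$ as $m\to\infty$. Borel--Cantelli then shows that for $\lambda$-a.e.\ $y$ only finitely many effective bifurcations occur, whence $L_r(y)$ has finitely many, necessarily isolated, points. Carrying out this renewal estimate for the correlated walk of odd $r$, where the strong Markov property must be replaced by the regeneration structure of the correlated walk, is the step I expect to be hardest, and is where the quantitative properties of the symmetric correlated random walk are essential.
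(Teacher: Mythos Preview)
Your plan for part (ii) is essentially the paper's: recurrence of the symmetric correlated walk gives infinitely many returns $b_n(x)=0$, and at each return the mirror symmetry of $f_r$ on $I_n(x)$ yields a genuine bifurcation, producing a Cantor subset of $L_r(f_r(x))$.

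For part (i), however, your plan has a real gap. You correctly identify the obstacle---the crude covering by all zero-slope intervals gives $\sum_n \rP(b_n=0)\asymp\sum_n n^{-1/2}=\infty$---but the proposed ``multiscale renewal'' fix is not a workable idea as stated. For a \emph{recurrent} walk the probability of at least $K$ returns to $0$ is $1$, not geometrically small, so no naive gambler's-ruin bound on ``branches that keep returning'' will give the geometric decay you need; you would have to specify precisely what renewal events you are counting and why their \emph{image lengths}, not their probabilities, become summable. As written this is a hope, not a mechanism.

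The paper's device is different and much cleaner, and it is the idea you are missing. Using the symmetry $\phi_n^+(1-x)=-\phi_n^+(x)$, one constructs a finite-to-one map $\rho$ sending any $x\in L_r(y)$ to a point with the \emph{same} $f_r$-value whose walk satisfies $b_k\geq 0$ for all $k$. Consequently one may cover $\{y:|L_r(y)|=\infty\}$ using only the zero-slope intervals $I_{n,j}$ along which the walk has stayed nonnegative throughout (the set $\A^+$). The relevant sum then becomes
\[
\sum_{n\geq 1}\rP\bigl(b_1\geq 0,\dots,b_{n-1}\geq 0,\ b_n=0\bigr),
\]
which is finite because these are, up to a constant factor, first-passage probabilities that sum to at most $1$. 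This bypasses the non-summability entirely and works uniformly for even and odd $r$.

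There is also a subtlety for odd $r$ that your dichotomy glosses over. It is \emph{not} true that the walk $(b_n(x))$ determines $f_r(x)$ when $r\geq 3$: for odd $r$ one can have two distinct points $x<x'$ with $b_n(x)=b_n(x')>0$ for every $n\geq 1$ and yet $f_r(x)=f_r(x')$ (e.g.\ $r=3$, $x=17/108$, $x'=37/108$). So even after reducing to nonnegative walks one must still argue that a level set meeting $D_{\mathrm{fin}}^+$ in infinitely many points forces infinitely many zero-slope intervals in $\A^+$ to catch $y$; the paper does this via a separate lemma showing that between any two such points there must lie an interval $I_{n,j}$ with $s_{n,j}=0$ and $y\in f_r(I_{n,j})$. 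Your sketch (``eventually one sign forces eventual monotonicity to each side'') does not account for this phenomenon.
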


\begin{corollary}
The occupation measure $\mu_{f_r}$ of $f_r$, defined by
\begin{equation*}
\mu_{f_r}(A):=\lambda\{x\in[0,1): f_r(x)\in A\}
\end{equation*}
for Borel sets $A$, is singular with respect to Lebesgue measure $\lambda$.
\end{corollary}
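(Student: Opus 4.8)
The plan is to exhibit a single Borel set that carries all of the mass of $\mu_{f_r}$ yet is Lebesgue-null; producing such a set is exactly what singularity requires. Observe first that $\mu_{f_r}$ is the push-forward of Lebesgue measure on $[0,1)$ under the continuous (hence Borel) map $f_r$, so it is a Borel probability measure on $\RR$ satisfying $\mu_{f_r}(A)=\lambda(f_r^{-1}(A))$ for every Borel set $A$. To prove $\mu_{f_r}\perp\lambda$ it therefore suffices to find a Borel set $B$ with $\lambda(B)=0$ and $\mu_{f_r}(\RR\setminus B)=0$.

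The set $B$ will come from part (i) of Theorem \ref{thm:main}. Since $L_r(y)$ is finite for almost every $y$, the exceptional set $\{y:L_r(y)\text{ is infinite}\}$ is Lebesgue-null, so I may fix a Borel set $B$ with $\lambda(B)=0$ that contains it; thus $L_r(y)$ is finite for every $y\notin B$. It remains to check that $\mu_{f_r}$ puts no mass outside $B$. By the push-forward identity,
\[
\mu_{f_r}(\RR\setminus B)=\lambda\bigl(f_r^{-1}(\RR\setminus B)\bigr)=\lambda\{x\in[0,1):f_r(x)\notin B\}.
\]
If $f_r(x)\notin B$, then by the choice of $B$ the level set $L_r(f_r(x))$ is finite; hence
\[
\{x\in[0,1):f_r(x)\notin B\}\subseteq\{x\in[0,1):L_r(f_r(x))\text{ is finite}\}.
\]
Part (ii) of Theorem \ref{thm:main} asserts that $L_r(f_r(x))$ is uncountable for almost every $x$, so the set on the right is Lebesgue-null. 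Consequently $\mu_{f_r}(\RR\setminus B)=0$, and $B$ is the required witness to $\mu_{f_r}\perp\lambda$.

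The only delicate point is measurability: a priori it is unclear whether $\{y:L_r(y)\text{ is infinite}\}$ or $\{x:L_r(f_r(x))\text{ is finite}\}$ are Borel. I sidestep this by taking $B$ to be a Borel null set that merely \emph{contains} the first of these, and by noting that $f_r^{-1}(\RR\setminus B)$ is automatically Borel (since $f_r$ is continuous and $B$ is Borel) while being contained in a Lebesgue-null set, so that its measure is unambiguously $0$. Thus the corollary is a soft consequence of the two halves of Theorem \ref{thm:main}, which pull in opposite directions: (i) confines the levels with large level sets to a null set, while (ii) forces almost all the mass of the occupation measure onto precisely those levels.
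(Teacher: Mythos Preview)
Your proof is correct and follows essentially the same approach as the paper: both use part (i) to find a Lebesgue-null set of ordinates (the paper takes $A=\{y:|L_r(y)|<\infty\}$ and works with its complement, you take a Borel null $B$ containing that complement) and then invoke part (ii) to show the occupation measure lives on it. Your extra care about the measurability of $\{y:L_r(y)\text{ is infinite}\}$ is a refinement the paper silently skips, but otherwise the arguments coincide.
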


\begin{proof}
Let $A:=\{y\in\RR: |L_r(y)|<\infty\}$. Then $\lambda(\RR\backslash A)=0$ by part (i) of the theorem, whereas $\mu_{f_r}(A)=\lambda\{x\in[0,1): |L_r(f_r(x))|<\infty\}=0$ by part (ii).
\end{proof}

The above results were initially proved by Buczolich \cite{Buczolich} for the case $r=2$ (i.e. the Takagi function); alternative proofs were given recently by the present author \cite{Allaart3} and Lagarias and Maddock \cite{LagMad2}. For even $r\geq 4$, the proof is a fairly straightforward extension of the original arguments, but for odd $r$, the proof involves some additional subtleties; we use results about recurrence of correlated random walks.

It is worth noting that in the Baire category sense, the typical level set of $f_r$ is uncountably infinite. This follows since $f_r$, being nowhere differentiable, is certainly monotone on no interval, and a general theorem of Garg \cite[Theorem 1]{Garg} states that for any continuous function $f$ which is monotone on no interval, the set $\{y: f^{-1}(y)\ \mbox{is a perfect set}\}$ is residual in the range of $f$.

The functions $f_r$ were studied previously by Baba \cite{Baba}, who computed the maxima $M_r:=\max\{f_r(x): 0\leq x\leq 1\}$ and showed that the set $\{x\in[0,1]: f_r(x)=M_r\}$ is a Cantor set of dimension $1/2$ when $r$ is even, but is the singleton $\{1/2\}$ when $r$ is odd. This suggests that the even and odd cases are fundamentally different, an issue that will be encountered again in the proof of Theorem \ref{thm:main}(i) below.

Finally, it was shown by De Amo et al. \cite{ABDF} that the Hausdorff dimension of {\em any} level set of $f_2$ is at most $1/2$. It is straightforward to extend this result to $f_r$ for even $r$, but whether it is true for odd $r$ as well remains unclear, though it is relatively easy to see that $f_r$ always has level sets of dimension {\em at least} $1/2$.

The following notation is used throughout this paper: $\NN$ denotes the set of positive integers, and $\ZZ_+$ the set of nonnegative integers. For a set $E$, $|E|$ denotes the number of elements of $E$. 

\section{Proof of nondifferentiability} \label{sec:nondifferentiability}

For completeness, we first give a short proof of the nondifferentiability of $f_r$.

\begin{theorem} \label{thm:non-diff}
For each $r\geq 2$, $f_r$ has a well-defined finite derivative at no point.
\end{theorem}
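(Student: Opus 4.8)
The plan is to control the secant slopes of $f_r$ across the base-$r$ intervals shrinking to a point and to show these slopes never converge. The underlying principle is the elementary straddle lemma: if $f$ is differentiable at $x$ and $a_m\le x\le b_m$ with $a_m<b_m$ and $b_m-a_m\to0$, then $(f(b_m)-f(a_m))/(b_m-a_m)\to f'(x)$, since the secant slope is a convex combination of the forward and backward difference quotients at $x$, both of which tend to $f'(x)$. It therefore suffices to produce, for each $x$, straddling sequences along which the secant slopes diverge.

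Fix $x\in[0,1)$ with base-$r$ expansion $x=0.d_1d_2\cdots$ and set $a_m=\lfloor r^mx\rfloor r^{-m}$, $b_m=a_m+r^{-m}$, the endpoints of the level-$m$ base-$r$ interval containing $x$. Since $\phi(r^nx)=0$ whenever $r^nx\in\ZZ$, the terms with $n\ge m$ vanish and
\[
D_m:=\frac{f_r(b_m)-f_r(a_m)}{b_m-a_m}=\sum_{n=0}^{m-1}\sigma_n,\qquad \sigma_n:=\frac{\phi(r^nb_m)-\phi(r^na_m)}{r^nb_m-r^na_m},
\]
each $\sigma_n$ being the secant slope of $\phi$ over an interval of length $r^{\,n-m}\le r^{-1}$. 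A short divisibility check shows that a point of $\tfrac12+\ZZ$ can lie strictly inside $[r^na_m,r^nb_m]$ only when $r^{\,m-n}$ is odd. Hence for even $r$ (including $r=2$) no corner of $\phi$ is ever interior, every $\sigma_n=\pm1$, and $D_{m+1}-D_m=\pm1$; so $(D_m)$ cannot converge and the straddle lemma disposes of the even case at once.

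The main obstacle is odd $r$, where a corner of $\phi$ can genuinely fall inside $[r^na_m,r^nb_m]$; the divisibility analysis shows this happens for the term $\sigma_n$ precisely when $d_{n+1}=\cdots=d_m=(r-1)/2$. The observation that saves the argument is that in this case the interval is symmetric about $\tfrac12$, namely $[\tfrac12-\tfrac12r^{\,n-m},\tfrac12+\tfrac12r^{\,n-m}]$, and since $\phi$ is symmetric about $\tfrac12$ its secant slope there is exactly $\sigma_n=0$. Thus the corrupted terms contribute nothing: writing $\psi(m)$ for the largest index $\le m$ with $d_{\psi(m)}\ne(r-1)/2$, we get $D_m=\sum_{n=0}^{\psi(m)-1}\epsilon_n$ with $\epsilon_n=\mathrm{sign}(\tfrac12-\{r^nx\})\in\{-1,+1\}$.

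For every $x$ having infinitely many digits different from $(r-1)/2$ — that is, all but countably many $x$ — the quantity $\psi(m)$ tends to infinity, so $D_m$ runs through the partial sums $T_j:=\sum_{n<j}\epsilon_n$ sampled at the indices $j=\psi(m)$. The crux computation is to check that between two consecutive such indices $j<j^\ast$ the signs $\epsilon_j,\dots,\epsilon_{j^\ast-1}$ are all equal, so that $T_{j^\ast}-T_j=\pm(j^\ast-j)$ differs from $0$ by at least $1$; consequently $(D_m)$ again fails to converge, and the straddle lemma gives nondifferentiability. The remaining countably many $x$ whose digits are eventually all $(r-1)/2$ (for odd $r$), among them $x=\tfrac12$, are handled directly by a one-sided difference-quotient argument exploiting the corner of $f_r$ at such points. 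Together these cases show $f_r$ has a finite derivative nowhere.
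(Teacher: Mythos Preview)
Your argument is correct, but for odd $r$ it takes a genuinely different and more laborious route than the paper's. The paper works with the half-base-$r$ intervals $[j/(2r^n),(j+1)/(2r^n)]$ rather than your base-$r$ intervals $[j/r^m,(j+1)/r^m]$. This single change of grid eliminates all the complications you encounter: over an interval of length $(2r^n)^{-1}$ the first $n$ terms are linear with slope $\pm1$ exactly as in your even case, while the entire tail $\sum_{k\ge n}$ telescopes to a geometric series contributing the fixed constant $(-1)^{j_n}r/(r-1)$. Thus the secant slope is $\sum_{k=0}^{n-1}\phi_k^+(x)\pm r/(r-1)$, and consecutive slopes differ by one of finitely many nonzero numbers---a one-line conclusion valid for \emph{every} $x$, with no corrupted terms, no digit-pattern analysis, and no exceptional countable set to treat separately.

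Your base-$r$ grid forces you to confront corners of $\phi$ falling inside the scaled interval, and you handle this cleverly: the corrupted $\sigma_n$ equals $0$ by symmetry, and the remaining $\pm1$ terms are constant on each run of digits equal to $(r-1)/2$, so the secant slopes still jump by at least $1$ along the subsequence of ``good'' indices. This is correct and self-contained for all $x$ with infinitely many digits $\ne(r-1)/2$. The countable residual set $\{(2k+1)/(2r^N)\}$ is dispatched only by assertion (``exploiting the corner of $f_r$''); this is fillable---e.g.\ one checks that the one-sided difference quotients at such a point diverge to $\pm\infty$, or one simply switches to the half-base-$r$ grid for those points---but as written it is a loose end that the paper's approach never incurs. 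The trade-off: your argument stays entirely within the natural base-$r$ combinatorics and makes the connection to the digit expansion explicit, at the cost of a longer case analysis; the paper's half-grid choice is the cleaner device and treats all $x$ uniformly.
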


\begin{proof}
Following Billingsley \cite{Billingsley}, let $\phi_k(x):=r^{-k}\phi(r^k x)$, $k\in\ZZ_+$, and let $\phi_k^+(x)$ denote the right-hand derivative of $\phi_k$ at $x$. Then $\phi_k^+$ is well defined and $\{-1,1\}$-valued. Fix $x\in[0,1)$, and for each $n\in\NN$, let $u_n=j_n/2r^n$ and $v_n=(j_n+1)/2r^n$, where $j_n\in\ZZ_+$ is chosen so that $u_n\leq x<v_n$. Note that for $k\leq n$, $\phi_k$ is linear on $[u_n,v_n]$, so
\begin{equation*}
\frac{\phi_k(v_n)-\phi_k(u_n)}{v_n-u_n}=\phi_k^+(x), \qquad 0\leq k\leq n.
\end{equation*}
Suppose first that $r$ is even; then $\phi_k(v_n)-\phi_k(u_n)=0$ for $k>n$, and so
\begin{equation*}
m_n:=\frac{f_r(v_n)-f_r(u_n)}{v_n-u_n}=\sum_{k=0}^\infty \frac{\phi_k(v_n)-\phi_k(u_n)}{v_n-u_n}=\sum_{k=0}^{n} \phi_k^+(x).
\end{equation*}
This implies $m_{n+1}-m_n=\pm\,1$, and so $m_n$ cannot have a finite limit. Suppose next that $r$ is odd. Then
for $k\geq n$,
\begin{equation*}
\phi_k(v_n)-\phi_k(u_n)=\frac{(-1)^{j_n}}{2r^k},
\end{equation*}
using the $1$-periodicity of $\phi$. Thus, since $v_n-u_n=1/2r^n$,
\begin{equation*}
m_n:=\frac{f_r(v_n)-f_r(u_n)}{v_n-u_n}
=\sum_{k=0}^{n-1} \phi_k^+(x)+(-1)^{j_n}\sum_{k=n}^\infty \frac{1}{r^{k-n}}
=\sum_{k=0}^{n-1} \phi_k^+(x)+(-1)^{j_n}\frac{r}{r-1}.
\end{equation*}
It is now clear that $m_{n+1}-m_n$ can take only finitely many values, zero not among them, and hence $m_n$ can not have a finite limit as $n\to\infty$. Therefore, $f_r$ does not have a finite derivative at $x$.
\end{proof}

\begin{remark}
{\rm
The above argument is essentially the same as that given in the senior thesis of K. Spurrier \cite{Spurrier}, where a slightly more general result is proved. It is presented here only in a more concise form.
}
\end{remark}

\section{The correlated random walk}

The proof of the main theorem is based on the fact that the slopes of the partial sums of the series \eqref{eq:Takagi-def} follow a correlated random walk when $x$ is chosen at random from $[0,1)$. This is made precise below. For $n\in\ZZ_+$, define the partial sum
\begin{equation*}
f_r^n(x):=\sum_{k=0}^{n-1}r^{-k}\phi(r^k x),
\end{equation*}
and let $s_n(x)$ denote the right-hand derivative of $f_r^n$ at $x$, for $x\in[0,1)$.

\begin{definition} \label{def:CRW}
{\rm
A (symmetric) {\em correlated random walk} with parameter $p$ is a stochastic process $\{S_n\}_{n\in\ZZ_+}$ defined on a probability space $(\Omega,\FF,\rP)$ such that $S_0\equiv 0$ and $S_{n}=S_{n-1}+X_n$ for $n\geq 1$, where $X_1,X_2,\dots$ are $\{-1,1\}$-valued random variables satisfying
$\rP(X_1=1)=\sP(X_1=-1)=1/2$,
and
\begin{equation*}
\rP(X_{n+1}=1|X_n=1)=\sP(X_{n+1}=-1|X_n=-1)=p, \qquad n\in\NN.
\end{equation*}
}
\end{definition}

Observe that when $p=1/2$, the correlated random walk is just a symmetric simple random walk. Correlated random walks have been used to model a wide variety of phenomena, and there is a substantial body of literature about them; a comprehensive list of references can be found in \cite{Chen-Renshaw}.

\begin{lemma} \label{lem:CRW}
Viewed as a stochastic process on the space $(\Omega,\FF,\rP)=([0,1),\mbox{Borels},\lambda)$, the sequence $\{s_n(x)\}_{n\in\ZZ_+}$ is a correlated random walk with parameter
\begin{equation}
p_r=\begin{cases}
1/2, & \mbox{if $r$ is even}\\
\displaystyle\frac{r+1}{2r}, & \mbox{if $r$ is odd}.
\end{cases}
\label{eq:CRW-parameter}
\end{equation}
\end{lemma}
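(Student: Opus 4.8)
The plan is to translate the increments of $\{s_n(x)\}$ into a statement about base-$r$ digit expansions and then read off the transition structure. First I would note that, since $f_r^n$ is a finite sum, its right-hand derivative is $s_n(x)=\sum_{k=0}^{n-1}\phi^+(r^kx)$, where $\phi^+$ is the right-hand derivative of $\phi$, which (as already observed in the proof of Theorem \ref{thm:non-diff}) is everywhere defined and $\{-1,1\}$-valued. Hence $s_n(x)=\sum_{j=1}^n X_j$ with $s_0\equiv 0$ and increments $X_j:=\phi^+(r^{j-1}x)\in\{-1,1\}$, and $X_j=+1$ exactly when the fractional part $\{r^{j-1}x\}$ lies in $[0,1/2)$. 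Writing $x=0.d_1d_2\cdots$ in base $r$ (so that the digits $d_i$ are i.i.d.\ uniform on $\{0,\dots,r-1\}$ under $\lambda$), one has $\{r^{j-1}x\}=0.d_jd_{j+1}\cdots$, and since $x\mapsto\{r^{j-1}x\}$ preserves $\lambda$, every marginal satisfies $\rP(X_j=1)=\rP(X_j=-1)=1/2$. Everything then reduces to comparing this tail with the base-$r$ expansion of $1/2$, and the two parities of $r$ behave differently precisely because that expansion terminates when $r$ is even but not when $r$ is odd.

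For even $r$ I would observe that $1/2=0.(r/2)$ terminates, so $\{r^{j-1}x\}<1/2$ iff $d_j<r/2$; thus $X_j$ is a function of the single digit $d_j$ alone, the $X_j$ are i.i.d., and we obtain a symmetric simple random walk, i.e.\ the case $p=1/2$, with nothing further to check. For odd $r$, set $m:=(r-1)/2$, so that $1/2=0.\overline{m}$ in base $r$. Comparing $0.d_jd_{j+1}\cdots$ with $0.\overline{m}$ digit by digit gives the key relation (valid off a $\lambda$-null set of non-unique expansions): $X_j=+1$ if $d_j<m$, $X_j=-1$ if $d_j>m$, and $X_j=X_{j+1}$ if $d_j=m$, the last case because a leading matching digit defers the comparison to the shifted tail. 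Splitting according to the value of $d_n$ and using $\rP(X_{n+1}=1)=1/2$ then gives
\[
\rP(X_{n+1}=1,\,X_n=1)=\frac{m}{r}\cdot\frac12+\frac1r\cdot\frac12=\frac{r+1}{4r},
\]
whence $\rP(X_{n+1}=1\mid X_n=1)=\frac{r+1}{2r}=p_r$, and by symmetry $\rP(X_{n+1}=-1\mid X_n=-1)=p_r$.

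The step I expect to be the real obstacle is verifying that $\{X_n\}$ is genuinely a correlated random walk, i.e.\ that the increment process is \emph{Markov} with the transition law above, rather than merely that consecutive pairs have the right joint distribution. The difficulty is that, through runs of the middle digit $m$, a single $X_j$ can depend on arbitrarily many future digits, so $(X_1,\dots,X_{n-1})$ is not a function of $(d_1,\dots,d_{n-1})$ alone. To handle this I would let $\xi:=X_{n+1}$, the tail sign, which is a function of $(d_{n+1},d_{n+2},\dots)$ and hence independent of $(d_1,\dots,d_n)$, and use the recursion $X_j=h(d_j,X_{j+1})$, where $h(d,\cdot)$ is constant (equal to $+1$ or $-1$ according as $d<m$ or $d>m$) unless $d=m$, in which case it is the identity. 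From this recursion, $(X_1,\dots,X_{n-1})$ depends on $d_n$ only through $X_n$, while $(d_n,X_n)=(d_n,h(d_n,\xi))$ is independent of $(d_1,\dots,d_{n-1})$.

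Combining these two facts shows that, conditionally on $X_n$, the digit $d_n$ is independent of $(X_1,\dots,X_{n-1})$, so that $\rP(d_n=m\mid X_1,\dots,X_n)=\rP(d_n=m\mid X_n)=1/r$. Feeding this back into the case split finishes the argument: on $\{d_n\neq m\}$ the next sign $X_{n+1}$ is a fresh fair coin independent of the past, while on $\{d_n=m\}$ it equals $X_n$, so
\[
\rP(X_{n+1}=1\mid X_1,\dots,X_n)=\frac12\,\frac{r-1}{r}+\mathbf 1[X_n=1]\,\frac1r,
\]
which depends only on $X_n$ and equals $p_r$ when $X_n=1$ and $1-p_r$ when $X_n=-1$. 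This is exactly the defining property of a correlated random walk with parameter $p_r$, completing the proof.
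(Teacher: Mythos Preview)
Your argument is correct and takes a genuinely different route from the paper. The paper works directly with the dyadic-like partition $[j/(2r^n),(j+1)/(2r^n))$: it observes that $\phi_n^+(x)=1$ exactly when $2x$ lies in an even-indexed interval at scale $r^{-n}$, and then counts how many of the $r$ subintervals at the next scale are even-indexed (namely $r/2$ when $r$ is even and $(r+1)/2$ when $r$ is odd). Because this count is the same on \emph{every} even-indexed interval, the Markov property follows implicitly, though the paper does not spell this out. Your approach instead encodes the increment via the base-$r$ expansion of $x$ and the comparison of the tail $0.d_jd_{j+1}\cdots$ with the expansion of $1/2$, which explains neatly why the even and odd cases differ: the expansion of $1/2$ terminates for even $r$ (so $X_j$ depends on $d_j$ alone and the $X_j$ are i.i.d.), but is the repeating string $0.\overline{m}$ for odd $r$, forcing the recursion $X_j=h(d_j,X_{j+1})$ through runs of the middle digit. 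The paper's argument is shorter and more geometric; yours is longer but makes the Markov property fully explicit, which the paper leaves to the reader. Both yield the same $p_r$.
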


\begin{proof}
With the notation from the proof of Theorem \ref{thm:non-diff}, we have
\begin{equation*}
s_{n+1}(x)=s_n(x)+\phi_n^+(x), \qquad n\in\ZZ_+.
\end{equation*}
Note that $\phi_n^+(x)=1$ if and only if $0\leq r^n x\mod 1<1/2$, which is the case if and only if
\begin{equation*}
2x\in\bigcup_{k\in\ZZ} \left[\frac{2k}{r^n},\frac{2k+1}{r^n}\right).
\end{equation*}
For fixed $k$, the interval $[2k/r^n,(2k+1)/r^n)$ has length $r^{-n}$ and partitions into the $r$ subintervals
\begin{equation*}
\left[\frac{2kr}{r^{n+1}},\frac{2kr+1}{r^{n+1}}\right), \left[\frac{2kr+1}{r^{n+1}},\frac{2kr+2}{r^{n+1}}\right), \dots, \left[\frac{(2k+1)r-1}{r^{n+1}},\frac{(2k+1)r}{r^{n+1}}\right).
\end{equation*}
If $r$ is even, half of these are of the form $[2j/r^{n+1},(2j+1)/r^{n+1})$ with $j\in\ZZ$; if $r$ is odd, $(r+1)/2$ of them are of this form. Thus, if $x$ is chosen at random from $[0,1)$, we have
\begin{equation}
\rP(\phi_{n+1}^+(x)=1|\phi_n^+(x)=1)=p_r.
\label{eq:plus-plus}
\end{equation}
The equality
\begin{equation}
\rP(\phi_{n+1}^+(x)=-1|\phi_n^+(x)=-1)=p_r
\label{eq:minus-minus}
\end{equation}
can be established by a very similar argument. But it also follows from \eqref{eq:plus-plus} and symmetry. More precisely, since $\phi_n$ is symmetric about $x=1/2$, we have that 
\begin{equation}
x\not\in \left\{\frac{j}{2r^n}: j\in\ZZ\right\} \quad \Rightarrow \quad \phi_n^+(x)=-\phi_n^+(1-x).
\label{eq:skew-symmetry}
\end{equation}
Since the mapping $x\mapsto 1-x$ preserves Lebesgue measure and the set $\{j/2r^n: j\in\ZZ\}$ is Lebesgue null for each $n$, \eqref{eq:plus-plus} and \eqref{eq:skew-symmetry} together imply \eqref{eq:minus-minus}.
\end{proof}

\section{Level sets: Proof of the main theorem}

For $n\in\NN$ and $j=0,1,\dots,2r^{n-1}-1$, let $I_{n,j}$ denote the interval
\begin{equation*}
I_{n,j}:=\left[\frac{j}{2r^{n-1}},\frac{j+1}{2r^{n-1}}\right),
\end{equation*}
and let $s_{n,j}$ denote the slope of $f_r^n$ on $I_{n,j}$. In other words, $s_{n,j}=s_n(x)$ if $x\in I_{n,j}$. Note that $s_{n,j}$ is well defined since $f_r^n$ is linear on $I_{n,j}$. For $x\in[0,1)$ and $n\in\NN$, let $I_n(x)$ denote that interval $I_{n,j}$ which contains $x$.

A fundamental difference between the case of even $r$ and the case of odd $r$ is the following: $f_r(j/2r^{n-1})=f_r^n(j/2r^{n-1})$ for all $j\in\ZZ$ when $r$ is even, but this last equality holds only for even $j$ when $r$ is odd. Hence, $f_r$ corresponds with $f_r^n$ at both endpoints of $I_{n,j}$ in the case of even $r$, but at only one of the endpoints in the case of odd $r$. In fact, when $r$ and $j$ are both odd, 
$f_r(j/2r^{n-1})>f_r^k(j/2r^{n-1})$ for every $k\geq 0$, so the value of $f_r$ at $j/2r^{n-1}$ does not get ``fixed" after a finite number of steps in the construction of $f_r$. This is the main reason why understanding the level sets of $f_r$ is harder when $r$ is odd.

Let 
\begin{equation*}
\GG_l:=\{(x,f_r(x)): 0\leq x\leq 1/2\},
\end{equation*}
so $\GG_l$ is the ``left half" of the graph of $f_r$ over $[0,1]$. Our first goal is to prove a kind of self-similarity result, namely that the graph of $f_r$ above any interval $I_{n,j}$ with $s_{n,j}=0$ consists of $r$ nonoverlapping similar copies of $\GG_l$, scaled by $r^{-n}$ and with alternating orientations (see Corollary \ref{cor:self-similar-limit} below). This will follow from an analogous statement about the partial sums $f_r^n$ in Lemma \ref{lem:self-similar-partial} below. Observe that an $n$th level interval $I_{n,j}$ decomposes into $(n+1)$th level intervals as
\begin{equation*}
I_{n,j}=\bigcup_{i=0}^{r-1} I_{n+1,rj+i}.
\end{equation*}

\begin{lemma} \label{lem:self-similar-partial}
If $s_{n,j}=0$, then for each $m>n$, for $i=0,1,\dots,r-1$ and for $x\in \bar{I}_{n+1,rj+i}$ (where $\bar{I}$ denotes the closure of $I$),
\begin{equation}
f_r^m(x)-f_r^n\left(\frac{j}{2r^{n-1}}\right)=\begin{cases}
r^{-n}f_r^{m-n}(x'), & \mbox{if $rj+i$ is even}\\
r^{-n}f_r^{m-n}\left(\frac12-x'\right), & \mbox{if $rj+i$ is odd},
\end{cases}
\label{eq:self-similar-finite}
\end{equation}
where $x'$ is the point in $[0,1/2]$ such that
\begin{equation}
x=\frac{rj+i}{2r^n}+\frac{x'}{r^n}.
\label{eq:x-affine-map}
\end{equation}
\end{lemma}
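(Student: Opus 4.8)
The plan is to reduce everything to the explicit series for $f_r^m$ and to exploit the fact that the hypothesis $s_{n,j}=0$ forces $f_r^n$ to be \emph{constant} on $I_{n,j}$. First I would note that for each $k\leq n-1$ the summand $\phi_k(x)=r^{-k}\phi(r^kx)$ is affine on every interval $[m/2r^k,(m+1)/2r^k)$, and since $r^k\mid r^{n-1}$ the breakpoints of these intervals lie among the gridpoints $\{m/2r^{n-1}\}$; hence $\phi_k$ is affine on $I_{n,j}$, and therefore so is $f_r^n=\sum_{k=0}^{n-1}\phi_k$. As $s_{n,j}=0$, this means $f_r^n\equiv f_r^n(j/2r^{n-1})$ on $\bar I_{n,j}$, and in particular on $\bar I_{n+1,rj+i}\subseteq\bar I_{n,j}$. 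Consequently the left-hand side of \eqref{eq:self-similar-finite} is just the tail of the series:
\[
f_r^m(x)-f_r^n\left(\frac{j}{2r^{n-1}}\right)=f_r^m(x)-f_r^n(x)=\sum_{k=n}^{m-1}r^{-k}\phi(r^kx).
\]

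Next I would substitute the affine map \eqref{eq:x-affine-map}. Writing $l=k-n$ and $M=rj+i$, a direct computation gives $r^kx=r^l\big(\tfrac{M}{2}+x'\big)$, so the tail equals $r^{-n}$ times $\sum_{l=0}^{m-n-1}r^{-l}\phi\big(r^l(\tfrac{M}{2}+x')\big)$. At this point the argument splits on the parity of $M$, which is the crux of the lemma. When $M$ is even, $M/2$ is an integer and the $1$-periodicity of $\phi$ collapses $\phi\big(r^l(\tfrac{M}{2}+x')\big)$ to $\phi(r^lx')$; the sum then becomes exactly $f_r^{m-n}(x')$, giving the first case of \eqref{eq:self-similar-finite}.

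The delicate case is $M$ odd. Then $M/2=a+\tfrac12$ for an integer $a$, and periodicity reduces the argument to $\phi\big(r^l(\tfrac12+x')\big)$. Here I would invoke the symmetry of $\phi$ about half-integers: since $\phi$ is even and $1$-periodic, one has $\phi(y)=\phi(N-y)$ for every $N\in\ZZ$, and applying this with $N=r^l$ (noting $r^l(\tfrac12+x')+r^l(\tfrac12-x')=r^l\in\ZZ$) yields $\phi\big(r^l(\tfrac12+x')\big)=\phi\big(r^l(\tfrac12-x')\big)$. Summing produces $f_r^{m-n}(\tfrac12-x')$, which is the second case. I expect this parity-driven symmetry step to be the only genuine subtlety; once it is in place the two branches of \eqref{eq:self-similar-finite} follow at once, and the passage from the half-open interval $I_{n+1,rj+i}$ to its closure is handled automatically by continuity of the finite sums $f_r^m$ and $f_r^n$.
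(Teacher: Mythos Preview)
Your proof is correct and follows essentially the same route as the paper: reduce to the tail $\sum_{k=n}^{m-1}r^{-k}\phi(r^kx)$ using constancy of $f_r^n$ on $I_{n,j}$, then handle the two parities of $M=rj+i$ via periodicity and symmetry of $\phi$. Your treatment of the odd-$M$ case is in fact slightly cleaner than the paper's: by invoking $\phi(y)=\phi(r^l-y)$ directly you obtain $\phi\big(r^l(\tfrac12+x')\big)=\phi\big(r^l(\tfrac12-x')\big)$ uniformly in $l$, whereas the paper splits into the subcases $r$ odd and $r$ even and argues each separately.
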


\begin{proof}
If $s_{n,j}=0$, then $f_r^n$ is constant on $I_{n,j}$, and so
\begin{equation*}
f_r^m(x)-f_r^n\left(\frac{j}{2r^{n-1}}\right)=f_r^m(x)-f_r^n(x)=\sum_{k=n}^{m-1} r^{-k}\phi(r^k x).
\end{equation*}
If $rj+i$ is even, then $k\geq n$ implies, by the $1$-periodicity of $\phi$,
\begin{equation*}
\phi(r^k x)=\phi\left(r^{k-n}\left(\frac{rj+i}{2}+x'\right)\right)=\phi(r^{k-n}x'),
\end{equation*}
so that
\begin{equation*}
\sum_{k=n}^{m-1} r^{-k}\phi(r^k x)=\sum_{k=n}^{m-1} r^{-k}\phi(r^{k-n}x')=r^{-n}f_r^{m-n}(x').
\end{equation*}
Assume then that $rj+i$ is odd. If $r$ is odd as well, then the symmetry of $\phi$ gives, for all $k\geq n$,
\begin{equation*}
\phi\left(r^{k-n}\left(\frac{rj+i}{2}+x'\right)\right) = \phi\left(\frac12+r^{k-n}x'\right)
=\phi\left(\frac12-r^{k-n}x'\right)=\phi\left(r^{k-n}\left(\frac12-x'\right)\right),
\end{equation*}
and so
\begin{equation*}
\sum_{k=n}^{m-1} r^{-k}\phi(r^k x)=r^{-n}f_r^{m-n}\left(\frac12-x'\right).
\end{equation*}
On the other hand, if $r$ is even, then $r^{k-n}/2\in\ZZ$ for all $k>n$, and we get (with the convention that the empty sum takes the value zero):
\begin{align*}
\sum_{k=n}^{m-1} r^{-k}\phi(r^k x) &= r^{-n}\phi\left(\frac12+x'\right)+\sum_{k=n+1}^{m-1} r^{-k}\phi(r^{k-n}x')\\
&=r^{-n}\phi\left(\frac12-x'\right)+\sum_{k=n+1}^{m-1} r^{-k}\phi\left(r^{k-n}\left(\frac12-x'\right)\right)\\
&=r^{-n}f_r^{m-n}\left(\frac12-x'\right).
\end{align*}
This completes the proof.
\end{proof}

Taking limits as $m\to\infty$ in \eqref{eq:self-similar-finite}, we immediately obtain:

\begin{corollary} \label{cor:self-similar-limit}
If $s_{n,j}=0$, then for $i=0,1,\dots,r-1$ and $x\in \bar{I}_{n+1,rj+i}$,
\begin{equation*}
f_r(x)-f_r^n\left(\frac{j}{2r^{n-1}}\right)=\begin{cases}
r^{-n}f_r(x'), & \mbox{if $rj+i$ is even}\\
r^{-n}f_r\left(\frac12-x'\right), & \mbox{if $rj+i$ is odd},
\end{cases}
\end{equation*}
where $x'$ is defined by \eqref{eq:x-affine-map}.
\end{corollary}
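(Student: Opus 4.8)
The plan is simply to pass to the limit $m \to \infty$ in the identity \eqref{eq:self-similar-finite} furnished by Lemma \ref{lem:self-similar-partial}. The one preliminary fact I would record is that the partial sums converge to $f_r$ everywhere: since $\phi$ is bounded by $1/2$, the general term $r^{-k}\phi(r^k t)$ of \eqref{eq:Takagi-def} is dominated by $\tfrac12 r^{-k}$, so the series converges uniformly on $\RR$ by the Weierstrass $M$-test, and in particular $f_r^m(t)\to f_r(t)$ for every fixed real $t$. This pointwise convergence is all that is needed, and it holds at every point of the closed interval $\bar{I}_{n+1,rj+i}$, so no issue arises at the right endpoint where $x'=1/2$.

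First I would fix $n,j$ with $s_{n,j}=0$, fix $i\in\{0,1,\dots,r-1\}$, and fix $x\in\bar{I}_{n+1,rj+i}$, letting $x'$ be the associated point of $[0,1/2]$ determined by \eqref{eq:x-affine-map}. On the left-hand side of \eqref{eq:self-similar-finite}, the subtracted term $f_r^n(j/2r^{n-1})$ does not depend on $m$, while $f_r^m(x)\to f_r(x)$; hence the left-hand side converges to $f_r(x)-f_r^n(j/2r^{n-1})$, which is exactly the left-hand side of the corollary.

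Next I would treat the right-hand side, where the dependence on $m$ enters only through the upper index $m-n$ of the partial sum. Since $m-n\to\infty$ as $m\to\infty$, one has $f_r^{m-n}(x')\to f_r(x')$ in the case $rj+i$ even and $f_r^{m-n}(\tfrac12-x')\to f_r(\tfrac12-x')$ in the case $rj+i$ odd, the arguments $x'$ and $\tfrac12-x'$ both lying in $[0,1/2]$. Multiplying through by the fixed constant $r^{-n}$ and equating the two limits yields precisely the claimed formula. I do not expect any genuine obstacle here; the only point meriting a word of justification is that the limit may be taken pointwise at the endpoint of the closed interval, and the uniform convergence above settles this at once, which is why the conclusion follows immediately from the lemma.
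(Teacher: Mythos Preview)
Your proposal is correct and follows exactly the paper's approach: the paper simply states that the corollary follows by taking limits as $m\to\infty$ in \eqref{eq:self-similar-finite}, and you have merely supplied the (routine) justification via the Weierstrass $M$-test that this limit exists pointwise. There is nothing to add.
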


\begin{proof}[Proof of Theorem \ref{thm:main}(ii)]
Let
\begin{equation*}
\Omega_0:=\{x\in[0,1): s_n(x)=0\ \mbox{for infinitely many $n$}\}.
\end{equation*}
We claim that for each $x\in\Omega_0$, the level set $L_r(f_r(x))$ is uncountably infinite. Since the symmetric correlated random walk is recurrent (see, for instance, Example 1 on p.~879 of \cite{Chen-Renshaw}), $\Omega_0$ is of full measure in $[0,1)$ and so part (ii) of the theorem will follow from the claim and Lemma \ref{lem:CRW}. 

To prove the claim, the following notation is helpful. Let $\Sigma:=\{1,2,\dots,r\}$, let $\Sigma^*:=\bigcup_{k=0}^\infty \Sigma^k$ where $\Sigma^0:=\{\emptyset\}$, and denote by $\Sigma^\NN$ the set of all infinite sequences $(i_1,i_2,\dots)$ such that $i_k\in\Sigma$ for all $k\in\NN$. Let $x\in\Omega_0$, and let $\{n_k: k=0,1,\dots\}$ be a strictly increasing sequence of positive integers such that $s_{n_k}(x)=0$ for all $k$. Repeated application of Lemma \ref{lem:self-similar-partial} shows that there is an $r$-ary tree of intervals $I_{\bf i}$, ${\bf i}\in \Sigma^*$ with the following properties:
\begin{enumerate}
\item $I_\emptyset=I_{n_0}(x)$;
\item if ${\bf i},{\bf j}\in \Sigma^k$ with ${\bf i}\neq {\bf j}$, then $I_{\bf i}\cap I_{\bf j}=\emptyset$;
\item for each ${\bf i}\in\Sigma^k$, $f_r^{n_k}$ is constant on $I_{\bf i}$ with value $y_k:=f_r^{n_k}(x)$;
\item for each ${\bf i}\in\Sigma^k$, there is $j\in\ZZ$ such that $I_{\bf i}=I_{n_k,j}$;
\item if ${\bf i}=(i_1,\dots,i_k)\in\Sigma^k$ and ${\bf j}=(i_1,\dots,i_k,j)$ for any $j\in\Sigma$, then $I_{\bf j}\subset I_{\bf i}$.
\end{enumerate}
For any sequence ${\bf i}=(i_1,i_2,\dots)\in\Sigma^\NN$, the intersection $\bigcap_{k=1}^\infty I_{i_1,\dots,i_k}$ is a single point $x_{\bf i}$, and if ${\bf i}$ and ${\bf j}$ are different members of $\Sigma^\NN$, then $x_{\bf i}\neq x_{\bf j}$. Moreover, for each ${\bf i}\in\Sigma^\NN$, $f_r(x_{\bf i})=\lim_{k\to\infty}y_k=f_r(x)$. Thus, $L_r(f_r(x))$ is uncountable, and the claim is established.
\end{proof}

For the Takagi function (i.e. the case $r=2$), the sequence $\{s_n(x)\}_n$ uniquely determines $x$, and the sequence $\{|s_n(x)|\}_n$ determines $f_2(x)$ (see, for instance, \cite[Lemma 2.1]{Allaart3}). However, neither statement holds when $r\geq 3$. Instead of the simple Lemma 2.1 of \cite{Allaart3}, we need the carefully constructed mapping $\rho$ in Lemma \ref{lem:finite-to-one-map} below. This requires some additional notation. Let
\begin{equation*}
C:=\left\{\frac{j}{2r^n}: n\in\ZZ_+, j=0,1,\dots,2r^n-1\right\},
\end{equation*}
and
\begin{equation*}
D:=[0,1)\backslash C,
\end{equation*}
so $C$ is the set of ``corner" points of the partial sums $f_r^n$, and $D$, its complement, is the set of points at which each partial sum $f_r^n$ has a well-defined two-sided derivative. For $x\in[0,1)$, define
\begin{equation*}
n_+(x):=\inf\{n:s_n(x)<0\}-1=\sup\{n:s_1(x)\geq 0,\dots,s_n(x)\geq 0\}.
\end{equation*}
Let
\begin{align}
D_{fin}:&=\{x\in D: s_n(x)=0\ \mbox{for only finitely many $n$}\}, \label{eq:D-fin}\\
D_{fin}^+:&=\{x\in D_{fin}: s_n(x)\geq 0\ \mbox{for every $n$}\}. \label{eq:D-fin-plus}
\end{align}

\begin{lemma} \label{lem:finite-to-one-map}
There is a mapping $\rho:D\to D$ such that:
\begin{enumerate}[(i)]
\item $f_r(\rho(x))=f_r(x)$ for all $x$;
\item $|s_n(\rho(x))|=|s_n(x)|$ for all $n$ and all $x$;
\item $\rho(x)=x$ if and only if $s_n(x)\geq 0$ for all $n$;
\item if $s_n(x)<0$ for some $n$, then $n_+(\rho(x))>n_+(x)$;
\item $\rho(D_{fin})\subset D_{fin}$;
\item the restriction of $\rho$ to $D_{fin}$ is finite-to-one;
\item the limit $\rho^\infty(x):=\lim_{n\to\infty}\rho^n(x)$ exists for every $x$, and $s_n(\rho^\infty(x))\geq 0$ for all $n$ and all $x$; here $\rho^n$ denotes $n$-fold iteration of $\rho$.
\end{enumerate}
\end{lemma}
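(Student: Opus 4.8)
The plan is to build $\rho$ as a single reflection that converts the first downward excursion of the slope walk $\{s_n(x)\}$ into an upward one, while leaving $f_r$ and every $|s_n|$ untouched, and then to obtain (vii) by iteration. First I would dispose of the trivial case: if $s_n(x)\ge 0$ for all $n$, set $\rho(x)=x$, as (iii) demands. Otherwise let $N:=n_+(x)<\infty$. Since $\{s_n\}$ moves in $\pm1$ steps and is nonnegative through time $N$ with $s_{N+1}(x)<0$, we must have $s_N(x)=0$, so $x$ lies in some $I_{N,j}$ with $s_{N,j}=0$, and in fact in one of its subintervals $I_{N+1,rj+i}$ with $rj+i$ odd (the computation in the proof of Lemma~\ref{lem:CRW} gives $s_{N+1}(x)=\phi_N^+(x)=-1$ precisely on the odd subintervals). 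I would then set $\rho(x)$ to be the reflection of $x$ across an endpoint of this descending subinterval: across its left endpoint $a=(rj+i)/2r^N$ when $i\ge 1$, and across its right endpoint $b=(rj+1)/2r^N$ when $i=0$ (which happens only for odd $r$ and odd $j$).

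The engine of the argument is a local-symmetry statement: any level-$(N{+}1)$ grid point $c=m/2r^N$ that lies in the \emph{interior} of a zero-slope interval $I_{N,j}$ is an axis of symmetry for $f_r$. Indeed, every term $\phi_k$ with $k\ge N$ satisfies $\phi_k(c+t)=\phi_k(c-t)$ for all $t$, because $r^kc\in\tfrac12\ZZ$ and $\phi$ is symmetric about every point of $\tfrac12\ZZ$; while for $k\le N-1$ the term $\phi_k$ is linear near $c$, so the antisymmetric part contributes $f_r(c+t)-f_r(c-t)=2t\sum_{k<N}\phi_k^+(x)=2t\,s_N$, which vanishes since $s_N=s_{N,j}=0$ throughout $I_{N,j}$. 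Taking $c=a$ or $c=b$ yields (i) at once. Read through Lemma~\ref{lem:self-similar-partial} and Corollary~\ref{cor:self-similar-limit}, the same reflection carries the reversed copy of $\GG_l$ on the descending subinterval to the forward copy on the adjacent ascending subinterval via $x'\mapsto\tfrac12-x'$, and comparing the partial-sum slopes on the two mirror copies shows $s_n(\rho(x))=s_n(x)$ for $n\le N$ and $s_n(\rho(x))=-s_n(x)$ for $n>N$. This gives (ii); and since $s_{N+1}(\rho(x))=+1$ while the coarse data through time $N$ is unchanged, it also gives $\rho(x)\ne x$ and $n_+(\rho(x))\ge N+1$, hence (iii) and (iv). Property (v) is then immediate, because $s_n(\rho(x))=0\iff s_n(x)=0$.

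For (vi) I would describe the fibres directly. A preimage $x$ of $y=\rho(x)\in D_{fin}$ is either $y$ itself (possible only when $s_n(y)\ge0$ for all $n$) or is obtained by negating the walk of $y$ after some time $N$ for which $s_N(y)=0$, $s_{N+1}(y)=+1$, and $s_n(y)\ge0$ for $n\le N$. Every such $N$ is a zero of the walk occurring no later than $n_+(y)$, and each one yields at most two preimages, corresponding to the left- and right-reflection rules; since $y\in D_{fin}$ has only finitely many zeros, the fibre is finite. Finally, for (vii): as long as $\rho^{n-1}(x)$ is not slope-nonnegative, (iv) forces $n_+(\rho^n(x))>n_+(\rho^{n-1}(x))$, and the $n$th reflection occurs within distance $r^{-n_+(\rho^{n-1}(x))}$ of the current point; these distances shrink geometrically, so $\{\rho^n(x)\}$ is Cauchy and $\rho^\infty(x):=\lim_n\rho^n(x)$ exists. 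Because the stage-$n$ reflection never alters the level-$k$ interval containing the point for any $k\le n_+(\rho^{n-1}(x))$, each right-derivative $s_k$ stabilises along the orbit, and since $n_+\to\infty$ the stabilised value is $\ge0$; thus $s_k(\rho^\infty(x))\ge0$ for all $k$.

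The main obstacle is the case $i=0$, which arises exactly when $r$ and $j$ are both odd. Here the descending subinterval is the leftmost one in $I_{N,j}$, and its left endpoint $a=(rj)/2r^N$ is the \emph{boundary} of $I_{N,j}$ rather than an interior grid point; across it $s_N$ jumps (from $s_{N,j-1}$ down to $s_{N,j}=0$), so the local-symmetry lemma fails and $f_r$ is not symmetric about $a$. The resolution is to reflect instead across the right endpoint $b=(rj+1)/2r^N$, which is interior to $I_{N,j}$ and satisfies $r^Nb\in\ZZ$, so that every fine term is symmetric about $b$ and $s_N=0$ there; the lemma then applies and $f_r$ is preserved. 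Checking that this single alternative still negates the walk after time $N$ and lands in the ascending subinterval $I_{N+1,rj+1}$ is the delicate point that makes the odd case work, and it is precisely this second reflection rule that obliges the fibre bound in (vi) to be two rather than one.
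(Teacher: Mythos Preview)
Your proposal is correct and follows essentially the same approach as the paper. Your construction of $\rho$---identity on the slope-nonnegative set, and otherwise a reflection across the interior endpoint of the first descending subinterval $I_{N+1,rj+i}$ (left endpoint when $i\ge 1$, right endpoint when $i=0$)---is identical to the paper's, and your verification of (i)--(vii) matches: the paper cites Lemma~\ref{lem:self-similar-partial} and Corollary~\ref{cor:self-similar-limit} where you give the equivalent direct local-symmetry argument, and both proofs obtain (vii) via the nested intervals $I_{n_+(\rho^k(x))}(\rho^k(x))$. The paper separates out the case $N=n_+(x)=0$ (setting $\rho(x)=1-x$) because $I_{0,j}$ is not defined; your ``reflect across the left endpoint $1/2$'' rule recovers this, but you should make the base case explicit since the decomposition of $I_{N,j}$ into $r$ subintervals does not literally apply at level $0$.
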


\begin{proof}
We first construct $\rho$. If $s_n(x)\geq 0$ for every $n$, set $\rho(x):=x$. Properties (i)-(iv) are trivially satisfied for such a point $x$. Assume now that $s_n(x)<0$ for some $n$; then $n_0:=n_+(x)<\infty$, and $s_{n_0}(x)=0$. If $n_0=0$, then $s_1(x)<0$ and so $x\in(1/2,1)$, since $x\not\in C$; in this case we put $\rho(x):=1-x$. Properties (i)-(iv) are easily checked in this case as well. 

Suppose then that $n_0\geq 1$. Let $j_0$ be the integer such that $I_{n_0}(x)=I_{n_0,j_0}$. 
If $x\in I_{n_0+1,rj_0}$, put
\begin{equation*}
\rho(x):=\frac{rj_0+1}{r^{n_0}}-x,
\end{equation*}
the reflection of $x$ across the right endpoint of $I_{n_0+1}(x)$. Otherwise, $x\in I_{n_0+1,rj_0+l}$ for some $l\in\{1,\dots,r-1\}$, and we put
\begin{equation*}
\rho(x):=\frac{rj_0+l}{r^{n_0}}-x,
\end{equation*}
the reflection of $x$ across the {\em left} endpoint of $I_{n_0+1}(x)$. 

The construction of $\rho(x)$ is illustrated in Figure \ref{fig:proof-illustration}, in which $x\in I_{n_0+1,rj_0}$, whereas $x'\in I_{n_0+1,rj_0+2}$ and $x''\in I_{n_0+1,rj_0+4}$. Note that in all cases when $n_0<\infty$, we have that $\rho(x)\in I_{n_0}(x)$ and $s_{n_0+1}(\rho(x))>0$, which gives property (iv). Since $x\not\in C$, it is clear that $\rho(x)\neq x$, so (iii) holds. Property (i) follows by Corollary \ref{cor:self-similar-limit}, and property (ii) follows by Lemma \ref{lem:self-similar-partial}. 

The construction of $\rho$ is complete, and it remains to verify properties (v), (vi) and (vii).
Note that (v) follows immediately from (ii). To see (vi), let $z\in D_{fin}$, and let $n(z):=\max\{n\geq 0: s_n(z)=0\}$. Suppose $x\in D_{fin}$ with $\rho(x)=z$, and assume $x\neq z$. Then $n_+(x)\leq n(z)$, so there are only finitely many possible values for the number $n_0$ in the construction of $\rho(x)$ above. If $n_0=0$, there is only one $x$ with $n_+(x)=n_0$ such that $\rho(x)=z$, since $\rho(x)=1-x$ in this case. If $n_0\geq 1$, then the set $\{x: n_+(x)=n_0\ \mbox{and}\ \rho(x)=z\}$ can consist of two points (one reflected from the left of $z$, the other reflected from the right; see Figure \ref{fig:proof-illustration}), but no more. Thus, $\rho$ is finite-to-one on $D_{fin}$.

Property (vii) is obvious if there exists $k$ such that $s_n(\rho^k(x))\geq 0$ for all $n$. If there is no such $k$, then $n_+(\rho^k(x))$ is strictly increasing in $k$ by (iv), and since $\rho(x)\in I_{n_+(x)}(x)$ for every $x$, the intervals
\begin{equation*}
I_{n_+(\rho^k(x))}(\rho^k(x)), \qquad k\in\NN
\end{equation*}
are nested and shrink to a single point. This point must be the limit of $\rho^k(x)$.
\end{proof}

\begin{figure}
\begin{center}
\begin{picture}(360,175)(0,0)
\put(30,60){\line(1,0){300}}
\put(28,66){\makebox(0,0)[tl]{$[$}}
\put(328,66){\makebox(0,0)[tl]{$)$}}
\put(340,65){\makebox(0,0)[tl]{$I_{n_0,j_0}$}}
\multiput(30,100)(5,0){60}{\line(1,0){3}}
\dashline[+20]{3}(30,100)(0,70)
\dashline[+20]{3}(330,100)(360,130)
\put(0,110){\makebox(0,0)[tl]{$f_r^{n_0}$}}
\put(90,100){\line(-1,1){60}}
\put(90,100){\line(1,1){60}}
\put(210,100){\line(-1,1){60}}
\put(210,100){\line(1,1){60}}
\put(330,100){\line(-1,1){60}}
\put(0,170){\makebox(0,0)[tl]{$f_r^{n_0+1}$}}
\put(54,135){\makebox(0,0)[tl]{$\bullet$}}
\put(120,135){\makebox(0,0)[tl]{$\bullet$}}
\put(174,135){\makebox(0,0)[tl]{$\bullet$}}
\put(230,125){\makebox(0,0)[tl]{$\bullet$}}
\put(304,125){\makebox(0,0)[tl]{$\bullet$}}
\put(65,132){\vector(1,0){50}}
\put(88,145){\makebox(0,0)[tl]{$\rho$}}
\put(170,132){\vector(-1,0){40}}
\put(148,127){\makebox(0,0)[tl]{$\rho$}}
\put(300,123){\vector(-1,0){60}}
\put(268,118){\makebox(0,0)[tl]{$\rho$}}
\put(57,57){\line(0,1){6}}
\put(123,57){\line(0,1){6}}
\put(177,57){\line(0,1){6}}
\put(233,57){\line(0,1){6}}
\put(307,57){\line(0,1){6}}
\put(53,48){\makebox(0,0)[tl]{$x$}}
\put(95,52){\makebox(0,0)[tl]{$\rho(x)=\rho(x')$}}
\put(173,52){\makebox(0,0)[tl]{$x'$}}
\put(222,52){\makebox(0,0)[tl]{$\rho(x'')$}}
\put(301,52){\makebox(0,0)[tl]{$x''$}}
\put(30,25){\line(1,0){60}}
\put(150,25){\line(1,0){60}}
\put(270,25){\line(1,0){60}}
\put(28,31){\makebox(0,0)[tl]{$[$}}
\put(88,31){\makebox(0,0)[tl]{$)$}}
\put(148,31){\makebox(0,0)[tl]{$[$}}
\put(208,31){\makebox(0,0)[tl]{$)$}}
\put(268,31){\makebox(0,0)[tl]{$[$}}
\put(328,31){\makebox(0,0)[tl]{$)$}}
\put(40,20){\makebox(0,0)[tl]{$I_{n_0+1}(x)$}}
\put(157,20){\makebox(0,0)[tl]{$I_{n_0+1}(x')$}}
\put(276,20){\makebox(0,0)[tl]{$I_{n_0+1}(x'')$}}
\end{picture}
\caption{An illustration of the proof of Lemma \ref{lem:finite-to-one-map}, for the case $r=5$ and $j_0$ odd. The graph of $f_r^{n_0}$ is drawn in dashed lines, the graph of $f_r^{n_0+1}$ in solid lines. Only points in the leftmost subinterval of $I_{n_0,j_0}$ move to the right under the mapping $\rho$.}
\label{fig:proof-illustration}
\end{center}
\end{figure}
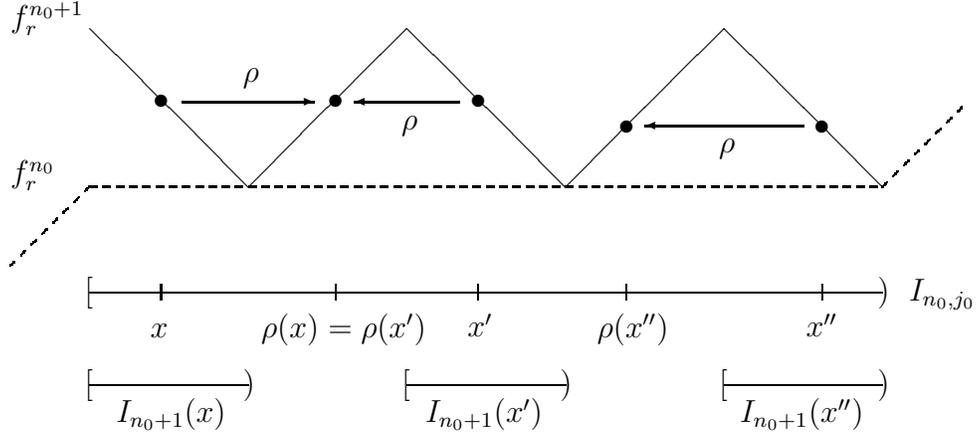

\begin{corollary} \label{cor:finite-to-one-map}
There is a finite-to-one mapping $\pi: D_{fin}\to D_{fin}^+$ such that $f_r(\pi(x))=f_r(x)$ for every $x\in D_{fin}$.
\end{corollary}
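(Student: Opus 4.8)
The plan is to take $\pi := \rho^\infty|_{D_{fin}}$, the restriction to $D_{fin}$ of the limit map furnished by Lemma \ref{lem:finite-to-one-map}(vii). Value-preservation is then immediate from property (i): it gives $f_r(\rho^n(x)) = f_r(x)$ for every $n$, and letting $n \to \infty$, using continuity of $f_r$ together with $\rho^n(x) \to \pi(x)$, yields $f_r(\pi(x)) = f_r(x)$. Property (vii) already guarantees $s_n(\pi(x)) \geq 0$ for all $n$, so the two remaining points to verify are that $\pi$ maps into $D_{fin}^+$ (rather than merely into the set where all slopes are nonnegative) and that $\pi$ is finite-to-one.

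Both will follow from a single observation: \emph{for $x \in D_{fin}$ the orbit $x, \rho(x), \rho^2(x), \dots$ stabilizes after finitely many steps.} First, property (ii) gives $|s_n(\rho(x))| = |s_n(x)|$, so $\rho$ preserves the zero set $\{n : s_n(x) = 0\}$; in particular $N := \max\{n : s_n(x) = 0\}$, finite because $x \in D_{fin}$, is an invariant of the orbit. Next, if $y \in D_{fin}$ but $y \notin D_{fin}^+$, and $m$ is the first index with $s_m(y) < 0$, then $s_{m-1}(y) = 0$, so $n_+(y) = m-1$ is a zero of $y$ and hence $n_+(y) \leq N$. By property (iv) each application of $\rho$ strictly increases $n_+$ as long as the current iterate is not in $D_{fin}^+$; since $n_+$ is then integer-valued and bounded above by the common $N$, only finitely many such applications are possible. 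Thus some iterate $\rho^{k}(x)$ lies in $D_{fin}^+$, whereupon $\rho$ fixes it by property (iii), so $\pi(x) = \rho^{k}(x)$. This point lies in $D_{fin}$ by property (v), and having all slopes nonnegative it lies in $D_{fin}^+$; this also re-proves $f_r(\pi(x)) = f_r(x)$ without invoking continuity.

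For the finite-to-one property I would fix $y \in D_{fin}^+$ and bound $\pi^{-1}(\{y\})$. Every $x$ with $\pi(x) = y$ satisfies $|s_n(x)| = |s_n(y)|$ by property (ii), so all such $x$ share the zero set of $y$, hence the same $N$ and the same number of zeros $m_0 := |\{n : s_n(y) = 0\}|$. The termination argument above then bounds, uniformly in $x$, the number of steps $k$ with $\rho^{k}(x) = y$ by $m_0$ (the successive values of $n_+$ are distinct zeros, of which there are at most $m_0$). Since $y$ is a fixed point of $\rho$, this gives $\pi^{-1}(\{y\}) \subseteq \rho^{-m_0}(\{y\}) \cap D_{fin}$. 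As $\rho$ is finite-to-one on $D_{fin}$ by property (vi), an $m_0$-fold preimage of a single point is finite, and the conclusion follows.

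The main obstacle is the termination claim of the second paragraph: everything hinges on combining the $|s_n|$-invariance (ii) with the monotonicity (iv) to trap $n_+$ between its initial value and the $\rho$-invariant last-zero index $N$. Once termination and the resulting uniform bound $m_0$ are in hand, both value-preservation and the finite-to-one conclusion are routine.
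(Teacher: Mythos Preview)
Your proposal is correct and follows essentially the same approach as the paper. The paper also sets $\pi(x):=\rho^k(x)$ where $k$ is the stabilization time, bounds $k$ by the number $m(z):=\#\{n\geq 0:s_n(z)=0\}$ via property~(iv), and concludes finiteness of $\pi^{-1}(\{z\})$ from property~(vi); your argument simply unpacks the termination step (which the paper compresses to ``by properties (ii)--(v)'') by making explicit that the $n_+$ values along the orbit are distinct zeros of the $\rho$-invariant zero set.
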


\begin{proof}
Let $\rho$ be as in Lemma \ref{lem:finite-to-one-map}. For $x\in D_{fin}$, there is by properties (ii)-(v) an integer $k$ such that $\rho^{k+1}(x)=\rho^k(x)$, and we put $\pi(x):=\rho^k(x)$. By property (i), $f_r(\pi(x))=f_r(x)$. Now fix $z\in D_{fin}^+$, and let $m(z):=\#\{n\geq 0:s_n(z)=0\}$. If $\pi(x)=z$, then $z=\rho^k(x)$ for some $k$, and by property (iv) this $k$ can be taken to be no greater than $m(z)$. Since $\rho$ is finite-to-one on $D_{fin}$, so is $\rho^k$ for each $k$. Thus, there are at most finitely points $x$ such that $\pi(x)=z$, so $\pi$ is finite-to-one.
\end{proof}

\begin{corollary} \label{cor:positive-slopes}
For each $x\in D$, there is a point $x'\in D$ such that $s_n(x')=|s_n(x)|$ for every $n$, and $f_r(x')=f_r(x)$.
\end{corollary}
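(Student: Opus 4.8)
The natural candidate is $x':=\rho^\infty(x)=\lim_{k\to\infty}\rho^k(x)$, whose existence is guaranteed by property (vii) of Lemma~\ref{lem:finite-to-one-map}. The plan is to verify the three required properties for this single point. That $f_r(x')=f_r(x)$ should be the easiest: iterating property (i) gives $f_r(\rho^k(x))=f_r(x)$ for every $k$, and since $f_r$ is continuous and $\rho^k(x)\to x'$, the common value passes to the limit.

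To obtain $s_n(x')=|s_n(x)|$ --- note that there is no absolute value on the left --- I would first iterate property (ii) to record that $|s_n(\rho^k(x))|=|s_n(x)|$ for all $n$ and $k$. The crux is then a limit-interchange: for each fixed $n$, $s_n(\rho^k(x))=s_n(x')$ for all sufficiently large $k$. I would establish this from the nested-interval picture in the proof of (vii): when the orbit does not stabilize, the intervals $I_{n_+(\rho^k(x))}(\rho^k(x))$ are nested, shrink to $x'$, and contain $x'$ together with $\rho^{k'}(x)$ for all $k'>k$. Once $n_+(\rho^k(x))\geq n$, such an interval lies inside a single level-$n$ interval (the level-$m$ partition refines the level-$n$ one for $m\geq n$), so $x'$ and $\rho^{k'}(x)$ lie on a common interval on which $f_r^n$ is linear, whence $s_n(x')=s_n(\rho^{k'}(x))$. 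Combining these facts gives $|s_n(x')|=|s_n(x)|$, and since property (vii) furnishes $s_n(x')\geq 0$, I conclude $s_n(x')=|s_n(x)|$.

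The last and most delicate requirement is that $x'\in D$, i.e. that the limit point is not one of the corner points in $C$. I would argue by contradiction. If $x'\in C$, say $x'=j/2r^{N-1}$, then for $k\geq N$ the quantity $r^kx'\mod 1$ is locked at either $0$ or $1/2$, so the one-sided derivatives $\phi_k^+(x')$ are eventually constant (equal to $+1$ or to $-1$, according to the parities of $j$ and of $r$). Hence $s_n(x')=\sum_{k=0}^{n-1}\phi_k^+(x')$ is eventually monotone, moving by $\pm1$ at each step, so $s_n(x')=0$ for only finitely many $n$. Because $s_n(x')=|s_n(x)|$, this forces $s_n(x)=0$ for only finitely many $n$, i.e. $x\in D_{fin}$; but for such $x$ the argument in Corollary~\ref{cor:finite-to-one-map} shows the orbit stabilizes, so $x'=\rho^k(x)\in D$, contradicting $x'\in C$. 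Therefore $x'\in D$.

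I expect the main obstacle to be exactly this final step --- excluding the possibility that $\rho^\infty(x)$ is a corner point --- because it is the only place where the half-open interval structure and the even/odd distinction in $r$ genuinely re-enter, and where a degenerate limit must be ruled out. The first two properties, by contrast, are essentially formal consequences of Lemma~\ref{lem:finite-to-one-map} together with the continuity of $f_r$.
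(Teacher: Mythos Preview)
Your approach is exactly the paper's: set $x'=\rho^\infty(x)$ and invoke properties (i), (ii), (vii) of Lemma~\ref{lem:finite-to-one-map} together with the continuity of $f_r$. You actually supply more detail than the paper's two-line proof---in particular an explicit argument that $x'\in D$, which the paper omits---and the only point worth tightening is your assertion that the nested half-open intervals $I_{n_+(\rho^k(x))}(\rho^k(x))$ \emph{contain} $x'$ (rather than merely having $x'$ in their closure): this holds because $s_{n_0+1}(\rho(\cdot))>0$ forces $\rho$ to land in a non-rightmost level-$(n_0{+}1)$ subinterval, so the right endpoints of the nested intervals strictly decrease.
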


\begin{proof}
Take $x':=\rho^\infty(x)$. The result follows from properties (i), (ii) and (vii) of Lemma \ref{lem:finite-to-one-map} and the continuity of $f_r$.
\end{proof}

The following lemma underlines the additional subtleties that need to be addressed in case $r$ is odd. Let
\begin{equation}
D^*:=\{x\in D: s_n(x)>0\ \mbox{for every $n\geq 1$}\}.
\label{eq:D-star}
\end{equation}
For two points $x$ and $x'$ with $x<x'$, say an interval $I=[a,b)$ {\em separates} $x$ and $x'$ if $x<a<b<x'$.

\begin{lemma} \label{lem:flat-in-between}
Let $x,x'\in D^*$ with $x<x'$, and suppose that $f_r(x)=f_r(x')$. Then there exists a pair of integers $(n,j)$ such that (i) $I_{n,j}$ separates $x$ and $x'$; (ii) $s_{n,j}=0$; and (iii) $f_r(x)\in f_r(I_{n,j})$.
\end{lemma}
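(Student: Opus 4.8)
The plan is to locate the required interval at the \emph{first} level at which the partial-sum slopes strictly between $x$ and $x'$ fail to be positive, and then to verify the height condition (iii) using the sharp self-similar tail estimate rather than the crude geometric one. Throughout write $y_0:=f_r(x)=f_r(x')$ and recall that $f_r^n\le f_r$ pointwise, since $f_r-f_r^n=\sum_{k\ge n}r^{-k}\phi(r^k\cdot)\ge 0$; in particular $f_r^n(x),f_r^n(x')\le y_0$, and both tend to $y_0$.

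First I would control the slopes between $x$ and $x'$. For each $n$ set $R_n:=\{j:I_{n,j}\cap(x,x')\neq\emptyset\}$ and $m_n:=\min_{j\in R_n}s_{n,j}$. Since $x,x'\in D^*$, the endpoint intervals $I_n(x),I_n(x')$ have slope $\ge 1$, and for $n$ below the level at which $I_n(x)$ and $I_n(x')$ first differ, $R_n$ is a single index, so $m_n\ge 1$ there. Two facts then drive the argument: (a) each level-$(n{+}1)$ interval meeting $(x,x')$ sits inside a level-$n$ interval meeting $(x,x')$ whose slope it equals up to $\pm1$, so that $m_{n+1}\ge m_n-1$; and (b) writing $f_r^n(x')-f_r^n(x)=\sum_{j\in R_n}s_{n,j}\,\lambda(I_{n,j}\cap[x,x'])$, if every such $s_{n,j}$ were $\ge 1$ the sum would be at least $x'-x>0$, contradicting $f_r^n(x')-f_r^n(x)\to 0$. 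Hence $m_n\le 0$ for all large $n$, and I let $n^*$ be the least $n$ with $m_{n^*}\le 0$. By (a) and minimality, $m_{n^*-1}\ge 1$ forces $m_{n^*}\ge 0$, so $m_{n^*}=0$.

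At level $n^*$ every interval meeting $(x,x')$ has slope $\ge 0$, and such intervals cover $[x,x']$, so $f_r^{n^*}$ is nondecreasing on $[x,x']$, while some $I_{n^*,j^*}\in R_{n^*}$ has $s_{n^*,j^*}=0$. Having slope $0$, this interval is neither $I_{n^*}(x)$ nor $I_{n^*}(x')$ (both of slope $\ge 1$); being a level-$n^*$ interval meeting $(x,x')$ but neither endpoint interval, it lies strictly between them, and since $x,x'\in D=[0,1)\setminus C$ are not corner points one checks that $I_{n^*,j^*}=[a,b)$ satisfies $x<a<b<x'$. This establishes (i) and (ii).

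The crux is (iii), and I expect it to be the main obstacle. Let $c$ denote the constant value of $f_r^{n^*}$ on $I_{n^*,j^*}$. By Corollary \ref{cor:self-similar-limit} the graph of $f_r$ over $\bar I_{n^*,j^*}$ is a union of scaled copies of $\GG_l$, whence $f_r(\bar I_{n^*,j^*})=[\,c,\;c+r^{-n^*}M_r\,]$; thus (iii) amounts to $c\le y_0\le c+r^{-n^*}M_r$. Monotonicity of $f_r^{n^*}$ on $[x,x']$ (with $I_{n^*,j^*}$ lying strictly between $x$ and $x'$) gives $f_r^{n^*}(x)\le c\le f_r^{n^*}(x')\le y_0$, which is the lower bound. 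For the upper bound the naive estimate $f_r-f_r^n\le \tfrac{r}{2(r-1)}r^{-n}$ is \emph{too weak}, as it exceeds $r^{-n}M_r$ for small even $r$; instead I would use the exact self-similar identity $f_r(t)-f_r^n(t)=r^{-n}f_r(r^n t\bmod 1)$, which yields the sharp bound $f_r(t)-f_r^n(t)\le r^{-n}M_r$. Applied at $t=x$ this gives $c\ge f_r^{n^*}(x)\ge y_0-r^{-n^*}M_r$, i.e. $y_0\le c+r^{-n^*}M_r$, completing (iii). The descent-to-zero-slope step is comparatively routine; it is recognizing that only this sharp, self-similar tail bound closes the height estimate that I anticipate as the delicate point.
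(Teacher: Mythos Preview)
Your proof is correct and, once unpacked, lands on exactly the same interval as the paper's, but the route to it is organized differently. The paper simply takes the smallest $n$ for which \emph{some} $I_{n,j}$ separates $x$ and $x'$; at that level $[x,x']\subset I_{n-1}(x)\cup I_{n-1}(x')$, so all level-$n$ slopes on $[x,x']$ are $\ge 0$, and a short size estimate (using $M_r<1$) rules out $s_{n,j}\ge 1$ for any separating $I_{n,j}$. Your descent argument---tracking $m_n=\min_{j\in R_n}s_{n,j}$, using $m_{n+1}\ge m_n-1$ and the integral identity to force $m_n\le 0$ eventually, and stopping at the first such $n^*$---produces the same $n^*$ (one checks that your $n^*$ coincides with the paper's minimal separating level, since for $n'<n^*$ the only intervals meeting $(x,x')$ are $I_{n'}(x)$ and $I_{n'}(x')$). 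Your framework is a bit more machinery than strictly needed, but it is sound; the minor overstatement ``$m_n\le 0$ for all large $n$'' is harmless since only ``for some $n$'' is used. For part~(iii) the two arguments are identical: both use $f_r(t)-f_r^n(t)=r^{-n}f_r(r^n t\bmod 1)\le r^{-n}M_r$ together with the monotonicity of $f_r^{n^*}$ on $[x,x']$, and you are right that the crude geometric tail bound would not close the upper inequality.
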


\begin{proof}
There clearly exists a smallest $n$ for which there exists $j$ such that $I_{n,j}$ separates $x$ and $x'$. Fix this $n$ and such a $j$, and let $s:=s_{n,j}$. Observe that $[x,x']\subset I_{n-1}(x)\cup I_{n-1}(x')$, as otherwise there would exist $i$ such that $I_{n-1,i}$ separates $x$ and $x'$. Since $x$ and $x'$ lie in $D^*$, the slopes of $f_r^{n-1}$ are strictly positive on $I_{n-1}(x)\cup I_{n-1}(x')$, and hence, on $[x,x']$. As a result, $f_r^n$ is nondecreasing throughout $[x,x']$, and in particular, $s\geq 0$. We have
\begin{align}
\begin{split}
f_r(x)&=f_r^n(x)+\sum_{k=n}^\infty r^{-k}\phi(r^k x)=f_r^n(x)+r^{-n}f_r(r^n x)\\
&\leq f_r^n\left(\frac{j}{2r^{n-1}}\right)+r^{-n}\max_z f_r(z)
<f_r^n\left(\frac{j}{2r^{n-1}}\right)+r^{-n}.
\end{split}
\label{eq:inequality-chain}
\end{align}
Now if $s\geq 1$, then
\begin{equation*}
f_r^n\left(\frac{j+1}{2r^{n-1}}\right)-f_r^n\left(\frac{j}{2r^{n-1}}\right)=\frac{s}{2r^{n-1}}\geq \frac{1}{2r^{n-1}}\geq r^{-n},
\end{equation*}
so that
\begin{equation*}
f_r(x)<f_r^n\left(\frac{j+1}{2r^{n-1}}\right)\leq f_r^n(x')\leq f_r(x').
\end{equation*}
This contradicts the hypothesis of the lemma, and hence, $s=0$. 

It remains to show that $f_r(x)\in f_r(I_{n,j})$. First, since $s=0$, we have
\begin{equation*}
\max_{u\in I_{n,j}} f_r(u)=f_r^n\left(\frac{j}{2r^{n-1}}\right)+r^{-n}\max_z f_r(z)\geq f_r(x),
\end{equation*}
as in \eqref{eq:inequality-chain}. On the other hand, 
\begin{equation*}
f_r(x)=f_r(x')\geq f_r^n(x')\geq f_r^n\left(\frac{j+1}{2r^{n-1}}\right)=\min_{u\in I_{n,j}} f_r(u),
\end{equation*}
since the value of $f_r$ at one or both endpoints of $I_{n,j}$ must equal the (constant) value of $f_r^n$ on $I_{n,j}$, in view of Corollary \ref{cor:self-similar-limit}.
Thus, $f_r(x)\in f_r(I_{n,j})$, and the proof is complete.
\end{proof}

\begin{remark} \label{rem:only-when-odd}
{\rm
The hypothesis of Lemma \ref{lem:flat-in-between} is never satisfied when $r$ is even; since this fact is not needed to prove Theorem \ref{thm:main}, its (not too difficult) proof is omitted here. When $r$ is odd, however, there may exist points $x$ and $x'$ satisfying the hypothesis of the lemma. For instance, when $r=3$, $x=17/108$ and $x'=37/108$, a straightforward calculation yields that $(s_n(x))_{n\geq 1}=(s_n(x'))_{n\geq 1}=(1,2,3,4,3,4,\dots)$, and $f_3(x)=f_3(x')=3/8$.
}
\end{remark}

\begin{remark}
{\rm
In connection with the previous remark, it is worth noting that $f_r(x)$ is rational when $x$ is rational. This follows since if $x$ is rational, it has a base-$r$ expansion which is eventually periodic, and so $r^n x \mod 1$ is eventually periodic. By the $1$-periodicity of $\phi$, this implies that $\phi(r^n x)$ is eventually periodic, and it is easy to deduce from this that $f_r(x)$ is rational.
}
\end{remark}

Before stating the next important preliminary result, we define the discrete sets
\begin{align*}
\A&:=\{(n,j): n\in\NN,\ 0\leq j<2r^{n-1},\ \mbox{and}\ s_{n,j}=0\},\\
\A^+&:=\{(n,j)\in \A:\ \mbox{the slope of $f_r^k$ is nonnegative on $I_{n,j}$ for $k=1,\dots,n$}\},
\end{align*}
and finally,
\begin{equation*}
\A^+(y):= \{(n,j)\in \A^+: y\in f_r(I_{n,j})\}, \qquad y\in\RR.
\end{equation*}
To visualize the set $\A^+(y)$, it may help to recall from Corollary \ref{cor:self-similar-limit} that if $s_{n,j}=0$, then the graph of $f_r$ above $I_{n,j}$ consists of $r$ small-scale similar copies of the half graph $\GG_l$, sitting side by side at the same height with alternating orientations. The set $\A^+(y)$ consists of those pairs $(n,j)$ in $\A$ for which the horizontal line at level $y$ intersects these $r$ similar copies of $\GG_l$, and, furthermore, the partial sum functions $f_r^1,\dots,f_r^n$ all have nonnegative slope throughout $I_{n,j}$.

\begin{proposition} \label{prop:key-idea}
Let $y\in\RR\backslash f_r(C)$. If $|\A^+(y)|<\infty$, then $|L_r(y)|<\infty$.
\end{proposition}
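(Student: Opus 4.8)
The plan is to peel the points of $L_r(y)$ off in stages, each time converting structural information into membership in the finite set $\A^+(y)$. First I would observe that since $y\notin f_r(C)$, no corner point sits at level $y$, so $L_r(y)\subseteq D$. Next I would show that in fact $L_r(y)\subseteq D_{fin}$. For this, suppose some $x\in L_r(y)$ had $s_n(x)=0$ for infinitely many $n$. The point $x^*:=\rho^\infty(x)$ furnished by Corollary \ref{cor:positive-slopes} then satisfies $f_r(x^*)=y$, $s_n(x^*)=|s_n(x)|\ge 0$ for all $n$, and $s_n(x^*)=0$ for those same infinitely many $n$. For each such $n\ge 1$ the interval $I_n(x^*)=I_{n,j}$ has $s_{n,j}=0$, the slopes of $f_r^1,\dots,f_r^n$ on it equal $s_1(x^*),\dots,s_n(x^*)\ge 0$, and $y\in f_r(I_{n,j})$; hence $(n,j)\in\A^+(y)$. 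Distinct values of $n$ give distinct pairs, contradicting $|\A^+(y)|<\infty$.

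With $L_r(y)\subseteq D_{fin}$ established, Corollary \ref{cor:finite-to-one-map} reduces everything to showing that $L_r(y)\cap D_{fin}^+$ is finite: the map $\pi$ is value-preserving and finite-to-one and carries $L_r(y)$ into $L_r(y)\cap D_{fin}^+$, so finiteness of the image forces finiteness of $L_r(y)$. The points of $L_r(y)\cap D_{fin}^+$ split into two types. The easy type is $L_r(y)\cap D^*$, where all slopes are strictly positive. Here Lemma \ref{lem:flat-in-between} applies directly: any two such points are separated by a flat interval $I_{n,j}$, and the strict positivity of the surrounding slopes forces $f_r^1,\dots,f_r^{n-1}$ to be nondecreasing there, so $(n,j)\in\A^+(y)$. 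Since distinct consecutive pairs of level-set points yield disjoint, hence distinct, separating intervals, at most $|\A^+(y)|+1$ such points can occur.

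The delicate type consists of those $z\in L_r(y)\cap D_{fin}^+$ possessing a (necessarily last, as $z\in D_{fin}$) zero slope, say $s_N(z)=0$ with $N\ge 1$. Each such $z$ lies in a flat interval $I_{N,j}$ with $(N,j)\in\A^+(y)$, of which there are only finitely many. Above $I_{N,j}$, Corollary \ref{cor:self-similar-limit} displays the graph of $f_r$ as $r$ rescaled copies of $\GG_l$; transporting $z$ into copy coordinates (and applying the symmetry $f_r(1-x)=f_r(x)$ in the reflected copies) turns $z$ into a point all of whose slopes are strictly positive, at a rescaled level $y'$. Thus these points become $D^*$-points at level $y'$, to which the argument of the previous paragraph applies. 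The crucial bookkeeping is that their separating flat intervals, pulled back through the self-similar map, land in flat intervals inside $I_{N,j}$ that are slope-nonnegative up to their level (from direct copies) or slope-nonpositive (from reflected copies), hence inject into $\A^+(y)$ or into its mirror image under $x\mapsto 1-x$; the latter has the same, finite, cardinality as $\A^+(y)$. So each of the finitely many $I_{N,j}$ contributes only finitely many points, and $L_r(y)\cap D_{fin}^+$ is finite.

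The main obstacle is this final step. Points with vanishing slopes cannot be fed into Lemma \ref{lem:flat-in-between}, which genuinely needs strict positivity throughout, so one is forced to descend into the self-similar copies and track orientation carefully — in particular the sign reversal in the reflected copies, precisely the feature that makes odd $r$ harder. What keeps this from degenerating into an infinite regress is that the separation estimate at the rescaled level $y'$ is self-contained and is bounded a priori by the fixed finite set $\A^+(y)$ and its mirror image, rather than by a recursive invocation of the proposition itself.
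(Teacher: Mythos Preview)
Your overall architecture matches the paper's: reduce to $D_{fin}$ via $\rho^\infty$, then to $D_{fin}^+$ via the finite-to-one map $\pi$, handle $D^*$-points directly with Lemma~\ref{lem:flat-in-between}, and treat the remaining points by zooming into the last interval $I_{N,j}$ on which the slope vanishes. The paper organizes the last two steps differently---it introduces an equivalence relation identifying points in $I_{N,j}$ that differ by a multiple of $r^{-N}$, passes to leftmost representatives, and proves a single separating-interval claim---but the underlying ideas are the same.

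There is, however, a concrete error in your bookkeeping for the reflected (odd-index) copies. You assert that the pulled-back separating intervals inside $I_{N,j}$ are ``slope-nonpositive up to their level'' and hence inject into the mirror image of $\A^+(y)$. This cannot be right: any interval contained in $I_{N,j}$ inherits the slopes $s_1,\dots,s_N\ge 0$ of $I_{N,j}$ for its first $N$ levels, and since $s_1=\pm 1$ these are not all zero, so such an interval is never in $\A^-$. What actually happens is that the orientation reversal of the odd-case affine map in Lemma~\ref{lem:self-similar-partial} flips the signs once more, so the pulled-back interval has \emph{nonnegative} slopes at every level and lies in $\A^+(y)$ itself. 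Concretely, if $rj+i$ is odd then $(rj+i-1)/2$ is an integer, so the map $z\mapsto r^N z-(rj+i-1)/2$ is an integer translate of a dilation; it carries your rescaled $D^*$-point $\tfrac12+z'$ and preserves all the $\phi_k^+$, whence $s_k(\tfrac12+z')=s_{N+k}(z)$ and the pullback of any $\A^+(y')$-interval lands in $\A^+(y)$. With this correction your counting goes through (each of the finitely many triples $(N,j,i)$ contributes at most $|\A^+(y)|+1$ points).

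The paper sidesteps this sign-tracking entirely: by taking leftmost representatives under its equivalence relation, it forces any two points falling under your ``delicate type'' to lie in the \emph{same} subinterval $I_{N+1,rj+i}$, and moreover one with $rj+i$ even, so only the direct (orientation-preserving) case of Lemma~\ref{lem:self-similar-partial} is ever invoked.
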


\begin{proof}
Recall the definitions of $D_{fin}$ and $D_{fin}^+$ from \eqref{eq:D-fin} and \eqref{eq:D-fin-plus}.
Let
\begin{equation*}
D^+:=\{x\in D: s_n(x)\geq 0\ \mbox{for every $n$}\}. 
\end{equation*}
Let $y$ be as given, and suppose $|\A^+(y)|<\infty$. Then $L_r(y)\subset D$, and 
\begin{equation}
L_r(y)\cap D^+\subset D_{fin}. 
\label{eq:inclusion}
\end{equation}
To see \eqref{eq:inclusion}, let $x\in L_r(y)\cap D^+$. Then $f_r(x)=y$ and $s_n(x)\geq 0$ for all $n$. Because $|\A^+(y)|<\infty$, there are only finitely many pairs $(n,j)$ in $\A^+$ such that $x\in I_{n,j}$. Since $s_n(x)\geq 0$ for all $n$, this last statement remains true if we replace $\A^+$ with $\A$. Thus, there are only finitely many $n$ such that $s_n(x)=0$, which means $x\in D_{fin}$.

For arbitrary $x\in L_r(y)$, there is by Corollary \ref{cor:positive-slopes} a point $x'\in L_r(y)\cap D^+$ such that $|s_n(x)|=s_n(x')$ for each $n$. But then $x'\in D_{fin}$ by \eqref{eq:inclusion}, and so $x\in D_{fin}$. Therefore, $L_r(y)\subset D_{fin}$. By Corollary \ref{cor:finite-to-one-map}, it is now enough to show that $|L_r(y)\cap D_{fin}^+|<\infty$. To this end, define an equivalence relation $\sim$ on $D$ as follows. Let
\begin{equation*}
n(x):=\sup\{n\geq 0: s_n(x)=0\}, 
\end{equation*}
and say $x\sim x'$ if either $x=x'$, or all of the following hold:
\begin{enumerate}[(i)]
\item $n(x)=n(x')=:n<\infty$;
\item $I_n(x)=I_n(x')$; and
\item $x-x'=mr^{-n}$ for some $m\in\ZZ$.
\end{enumerate}
Note that, since $I_n(x)$ has length $1/(2r^{n-1})$, condition (ii) forces the number $m$ in (iii) to satisfy $|m|<r/2$, and therefore, $\sim$ has finite equivalence classes.

Let $E$ be the set of those points $x$ in $L_r(y)\cap D_{fin}^+$ which are the leftmost member of their equivalence class; that is,
\begin{equation*}
x\in E\quad \Longleftrightarrow\quad x\in L_r(y)\cap D_{fin}^+\ \mbox{and $x\leq x'$ for each $x'$ with $x\sim x'$}.
\end{equation*}
Since $\sim$ has finite equivalence classes, it suffices to show that $E$ is finite.

\bigskip
\noindent {\bf Claim:} For any two points $x,x'\in E$ with $x<x'$, there is a pair $(n,j)\in \A^+(y)$ such that $I_{n,j}\cap [x,x']\neq\emptyset$, and $I_{n,j}$ contains at most one of $x$ and $x'$.

\bigskip
Assuming the Claim for now, we can finish the proof of the Proposition as follows. Suppose, by way of contradiction, that $E$ is infinite. Then we can find a strictly monotone sequence $\{x_\nu\}$ in $E$; say $\{x_\nu\}$ is increasing. (The argument when $\{x_\nu\}$ is decreasing is entirely similar.) The Claim implies the existence of pairs $\{(n_\nu,j_\nu): \nu\in\NN\}$ in $\A^+(y)$ such that for each $\nu$, $I_{n_\nu,j_\nu}\cap [x_\nu,x_{\nu+1}]\neq\emptyset$ and $I_{n_\nu,j_\nu}$ contains at most one of $x_\nu$ and $x_{\nu+1}$. Consider an arbitrary interval $I_{n,j}$, and suppose $I_{n,j}$ intersects $k$ of the intervals $[x_\nu,x_{\nu+1}]$. Then $I_{n,j}$ must fully contain at least $k-2$ of these, so by the hypothesis about $I_{n_\nu,j_\nu}$, $I_{n,j}$ can occur at most twice in the sequence $\{I_{n_\nu,j_\nu}\}$. Hence, this sequence contains infinitely many distinct intervals. But this implies $|\A^+(y)|=\infty$, contradicting the hypothesis of the proposition. Therefore, $|E|<\infty$.

\bigskip
We now turn to the proof of the Claim. Let $x,x'\in E$ with $x<x'$, and consider three cases:

\bigskip
{\em Case 1:} $s_n(x)>0$ and $s_n(x')>0$ for all $n\geq 1$. By Lemma \ref{lem:flat-in-between}, there is a pair $(n,j)\in \A^+$ such that $I_{n,j}$ separates $x$ and $x'$, and $y\in f_r(I_{n,j})$. Thus, $(n,j)\in \A^+(y)$.

\bigskip
{\em Case 2:} There is a pair $(n,j)\in \A^+$ such that $x\in I_{n,j}$ but $x'\not\in I_{n,j}$, or vice versa. Since $y=f_r(x)=f_r(x')$, we have in either case that $y\in f_r(I_{n,j})$, so $(n,j)\in \A^+(y)$.

\bigskip
{\em Case 3:} If neither Case 1 nor Case 2 holds, then there are integers $n$ and $j$ such that $n(x)=n(x')=n$, and $I_n(x)=I_n(x')=I_{n,j}$. We argue that in fact, $I_{n+1}(x)=I_{n+1}(x')$. From the definition of $n(x)$ we have $s_n(x)=s_n(x')=0$. Since $x$ and $x'$ are the leftmost points of their respective equivalence classes under $\sim$, their distance from the left endpoint of $I_{n,j}$ is at most $r^{-n}$. Furthermore, since both $s_{n+1}(x)>0$ and $s_{n+1}(x')>0$, $x$ and $x'$ must both lie in the leftmost subinterval $I_{n+1,k}$ of $I_{n,j}$ on which $f_r^{n+1}$ has positive slope. Thus, $I_{n+1}(x)=I_{n+1}(x')$. 

Let $z$ be the left endpoint of $I_{n+1}(x)$, and put
\begin{equation*}
\xi:=r^n(x-z),\qquad \xi':=r^n(x'-z).
\end{equation*}
Then $\xi,\xi'\in D\cap[0,1/2)$, and by Lemma \ref{lem:self-similar-partial}, $s_k(\xi)=s_{n+k}(x)$, and $s_k(\xi')=s_{n+k}(x')$ for each $k\in\NN$. In particular, $\xi,\xi'\in D^*$, where $D^*$ was defined by \eqref{eq:D-star}. By Case 1 and Corollary \ref{cor:self-similar-limit}, there is a pair $(m,j)\in\A^+$ such that $I_{m,j}$ separates $\xi$ and $\xi'$, and
\begin{equation}
y':=r^n(y-f_r^n(z))\in f_r(I_{m,j}). 
\label{eq:y-prime}
\end{equation}
Let $I:=z+r^{-n}I_{m,j}$. Then $I=I_{n+m,k}$ for some $k$, and $I$ separates $x$ and $x'$. By Corollary \ref{cor:self-similar-limit},
\begin{equation*}
f_r(I)=f_r\left(z+r^{-n}I_{m,j}\right)=f_r^n(z)+r^{-n}f_r(I_{m,j}),
\end{equation*}
and hence, by \eqref{eq:y-prime},
\begin{equation*}
y=f_r^n(z)+r^{-n}y' \in f_r(I).
\end{equation*}
It follows that $(n+m,k)\in \A^+(y)$, and so $I=I_{n+m,k}$ has the required property.

\bigskip
In all three cases, the conclusion of the Claim follows. The proof is complete.
\end{proof}

\begin{remark}
{\rm
In view of Remark \ref{rem:only-when-odd}, Cases 1 and 3 in the above proof cannot occur when $r$ is even, so in that case the Proposition is (nearly) trivial. This illustrates once more the additional complications that arise when $r$ is odd.
}
\end{remark}

The final ingredient of the proof of Theorem \ref{thm:main}(i) is the following lemma, for which we give a probabilistic proof based on Lemma \ref{lem:CRW}. Recall that $\A^+$ is the set of pairs $(n,j)$ such that the slope of $f_r^k$ is nonnegative on $I_{n,j}$ for $k=1,\dots,n$.

\begin{lemma} \label{lem:finite-sum}
It holds that
\begin{equation*}
\sum_{(n,j)\in\A^+} \lambda(f_r(I_{n,j}))<\infty.	
\end{equation*}
\end{lemma}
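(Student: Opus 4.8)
The plan is to estimate each summand, convert the sum into a series of probabilities for the correlated random walk via Lemma \ref{lem:CRW}, and then recognize that series as the expected number of returns to the origin made before the walk first goes negative, which I will show is finite by an excursion decomposition.

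First I would bound a single term. Fix $(n,j)\in\A^+$; then $s_{n,j}=0$, so Corollary \ref{cor:self-similar-limit} applies and shows that for every $x\in I_{n,j}$ one has either $f_r(x)=f_r^n(j/2r^{n-1})+r^{-n}f_r(x')$ or $f_r(x)=f_r^n(j/2r^{n-1})+r^{-n}f_r(1/2-x')$ for some $x'\in[0,1/2]$. Since $0\le f_r\le M_r$, this gives $f_r(I_{n,j})\subseteq[h,h+r^{-n}M_r]$ with $h:=f_r^n(j/2r^{n-1})$, and hence $\lambda(f_r(I_{n,j}))\le M_r r^{-n}$. Next I would count the intervals. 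Write $N_n:=|\{j:(n,j)\in\A^+\}|$. Each $I_{n,j}$ has length $1/(2r^{n-1})$ and the slopes $s_1(x),\dots,s_n(x)$ are constant on it, so by the definitions of $\A$ and $\A^+$ the union of the intervals $I_{n,j}$ with $(n,j)\in\A^+$ equals $\{x\in[0,1): s_k(x)\ge0 \text{ for } 1\le k\le n \text{ and } s_n(x)=0\}$. By Lemma \ref{lem:CRW}, writing $\{S_n\}$ for the correlated random walk with parameter $p_r$, the Lebesgue measure of this set is $\rP(S_n=0,\ S_k\ge0 \text{ for } 1\le k\le n)$. Thus $N_n=2r^{n-1}\rP(S_n=0,\ S_k\ge0\ \forall k\le n)$, and combining with the previous bound the powers of $r$ cancel:
\begin{equation*}
\sum_{(n,j)\in\A^+}\lambda(f_r(I_{n,j}))\le M_r\sum_{n\ge1}r^{-n}N_n=\frac{2M_r}{r}\sum_{n\ge1}\rP(S_n=0,\ S_k\ge0\ \forall k\le n).
\end{equation*}

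The main obstacle is therefore to prove that the last series is finite. I would interpret it probabilistically: by Tonelli's theorem it equals $\rE[R]$, where $R:=\#\{n\ge1: S_n=0 \text{ and } S_k\ge0 \text{ for } 1\le k\le n\}$ is the number of visits to $0$ made by the walk strictly before the first time $\tau:=\inf\{n:S_n=-1\}$ that it becomes negative; note that $S_k\ge0$ for all $k<\tau$, so the two descriptions of $R$ agree. Since the symmetric correlated random walk is recurrent, $\tau<\infty$ almost surely and every positive excursion returns to $0$ with probability one. Using the Markov property of the pair $(S_n,X_n)$, I would decompose according to the first step: with probability $1/2$ the first step is $-1$, whence $\tau=1$ and $R=0$; with probability $1/2$ it is $+1$, producing a first return to $0$ (so $R\ge1$) whose last increment is necessarily $-1$. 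From a return to $0$ reached by a $-1$ step, the walk begins another positive excursion with probability $1-p_r$ and steps to $-1$, terminating, with probability $p_r$, so the number of further returns is geometric with mean $(1-p_r)/p_r$. Hence $\rE[R\mid X_1=1]=1+(1-p_r)/p_r=1/p_r$ and $\rE[R]=\tfrac12\cdot\tfrac{1}{p_r}=1/(2p_r)<\infty$, which yields $\sum_{(n,j)\in\A^+}\lambda(f_r(I_{n,j}))\le M_r/(r p_r)<\infty$, as required.

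The delicate point is precisely this excursion/renewal decomposition for the correlated walk: unlike for the simple random walk ($p_r=1/2$), where successive excursions above and below $0$ are independent and equally likely, here one must track the direction of the last increment at each return to $0$ to obtain the correct continuation probability $1-p_r$ and the resulting geometric structure. Everything else is routine once the cancellation of $r^{-n}$ against $2r^{n-1}$ is observed.
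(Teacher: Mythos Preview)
Your proof is correct and follows essentially the same line as the paper's: both bound $\lambda(f_r(I_{n,j}))$ by a constant times $\lambda(I_{n,j})$ via Corollary~\ref{cor:self-similar-limit}, and both reduce the sum to $\sum_{n\ge 1}a_n$ with $a_n:=\rP(S_1\ge 0,\dots,S_{n-1}\ge 0,S_n=0)$ using Lemma~\ref{lem:CRW}. The only difference is in how the finiteness of $\sum a_n$ is established. The paper introduces the first-passage probabilities $b_n:=\rP(S_1>0,\dots,S_{n-1}>0,S_n=0)$, proves the path-shift identity $b_{n+2}=p_r^2 a_n$ (via an auxiliary ``flying start'' measure $\rP_+$), and then uses that the events defining $b_n$ are disjoint to get $\sum a_n\le p_r^{-2}$. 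You instead interpret $\sum a_n$ as $\rE[R]$, the expected number of returns to $0$ before the first visit to $-1$, and compute this by an excursion decomposition, exploiting that every return to $0$ from above arrives with last increment $-1$ so that the continuation probability is $1-p_r$; this gives $\rE[R]=1/(2p_r)$. The two arguments are equivalent in spirit (indeed, carrying the paper's computation through exactly gives $\sum_{n\ge 3}b_n=p_r/2$ and hence the same value $1/(2p_r)$); your renewal argument is perhaps more direct, while the paper's identity avoids the need to invoke the strong Markov property for the chain $(S_n,X_n)$.
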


\begin{proof}
Assume that on some probability space $(\Omega,\FF,\rP)$, a correlated random walk $\{S_n\}$ with steps $\{X_n\}$ and parameter $p=p_r$ is defined as in Definition \ref{def:CRW}, where $p_r$ is as in \eqref{eq:CRW-parameter}. Define a second probability measure $\rP_+$ on $(\Omega,\FF)$ by
\begin{equation*}
\rP_+(A):=2p_r\sP(A\cap\{X_1=1\})+2(1-p_r)\sP(A\cap\{X_1=-1\}), \qquad A\in\FF.
\end{equation*}
Under $\rP_+$, the process $\{S_n\}$ is a CRW with a ``flying start" in the upward direction: $\rP_+(X_1=1)=p_r$, and $\rP_+(X_n=1|X_{n-1}=1)=\sP(X_n=-1|X_{n-1}=-1)=p_r$ for $n\geq 2$.

Observe first that, for $(n,j)\in\A^+$,
\begin{equation} \label{eq:y-x-domination}
\lambda(f_r(I_{n,j}))\leq \lambda(I_{n,j}),
\end{equation}
because the graph of $f_r$ above $I_{n,j}$ consists of $r$ similar copies of the half graph $\GG_l$, and as such, its height is no greater than its width.

Define the sets
\begin{equation*}
\A_n^+:=\{j: 0\leq j<2r^{n-1},\ (n,j)\in\A^+\}, \qquad n\in\NN.
\end{equation*}
Note that for fixed $n$, the intervals $I_{n,j}, j\in\A_n^+$ are disjoint. Thus, we have
\begin{align*}
\sum_{(n,j)\in\A^+} \lambda(I_{n,j})&=\sum_{n=1}^\infty \sum_{j\in\A_n^+} \lambda(I_{n,j})
=\sum_{n=1}^\infty \lambda\left(\bigcup_{j\in\A_n^+}I_{n,j}\right)\\
&=\sum_{n=1}^\infty \lambda\{x\in[0,1): s_1(x)\geq 0, \dots, s_{n-1}(x)\geq 0, s_n(x)=0\}\\
&=\sum_{n=1}^\infty \sP(S_1\geq 0,\dots,S_{n-1}\geq 0,S_n=0),
\end{align*}
where the last equality follows by Lemma \ref{lem:CRW}. Denote the $n$th term of the last series above by $a_n$; that is,
\begin{equation*}
a_n:=\sP(S_1\geq 0,\dots,S_{n-1}\geq 0,S_n=0), \qquad n\in\NN,
\end{equation*}
and let
\begin{equation*}
b_n:=\sP(S_1>0,\dots,S_{n-1}>0,S_n=0), \qquad n\in\NN.
\end{equation*}
By a standard argument (see, for instance, \cite[p.~345]{Jain}),
\begin{align*}
b_{n+2}&=\sP(S_1=1,S_2\geq 1,\dots,S_n\geq 1,S_{n+1}=1,S_{n+2}=0)\\
&=\frac12\sP_+(S_1\geq 0,\dots,S_{n-1}\geq 0,S_n=0)\cdot p_r\\
&=p_r^2 a_n.
\end{align*}
Hence,
\begin{equation*}
\sum_{(n,j)\in\A^+} \lambda(I_{n,j})=\sum_{n=1}^\infty a_n=p_r^{-2}\sum_{n=1}^\infty b_{n+2}\leq p_r^{-2}\sP(S_n=0\ \mbox{for some $n\geq 3$})<\infty.
\end{equation*}
Together with \eqref{eq:y-x-domination}, this completes the proof.
\end{proof}

\begin{proof}[Proof of Theorem \ref{thm:main}(i)]
Since $f_r(C)$ is countable, we have, by Proposition \ref{prop:key-idea}, Lemma \ref{lem:finite-sum} and the Borel-Cantelli lemma,
\begin{align*}
\lambda\{y: |L_r(y)|=\infty\}&=\lambda\{y\in\RR\backslash f_r(C): |L_r(y)|=\infty\}\\
&\leq \lambda\{y:|\A^+(y)|=\infty\}\\
&=\lambda\{y: y\in f_r(I_{n,j})\ \mbox{for infinitely many pairs $(n,j)\in\A^+$}\}\\
&=0.
\end{align*}
Thus, $L_r(y)$ is finite for almost every $y$.
\end{proof}

\section*{Acknowledgment}

The author wishes to thank the anonymous referee for a careful reading of the paper and for suggesting several improvements to the presentation.

\footnotesize

\end{document}